\newcommand{\ZZ}{{\mathbb{Z}}}
\newcommand{\TT}{{\mathbb{T}}}
\newcommand{\RR}{{\mathbb{R}}}
\newcommand{\GP}{{\mathbf{P}}}
\newcommand{\GQ}{{\mathbf{Q}}}
\newcommand{\GN}{{\mathbf{N}}}
\newcommand{\GX}{{\mathbf{X}}}
\newcommand{\QQ}{{\mathbb{Q}}}
\newcommand{\GG}{{\mathbf{G}}}
\newcommand{\T}{\mathsf{T}}
\newcommand{\F}{\mathcal{F}}
\newcommand{\G}{{\mathcal{G}}}
\newcommand{\GL}{\mathbf{GL}}
\newcommand{\Ad}{{\mathrm{Ad}}}
\newcommand{\newterm}[1]{\textit{#1}}
\newcommand{\Sing}{\mathscr{S}}
\newcommand{\Gen}{\mathscr{G}}
\theoremstyle{plain}
\newtheorem{theorem}{Theorem}[section]
\newtheorem{lemma}[theorem]{Lemma}
\newtheorem{proposition}[theorem]{Proposition}
\theoremstyle{definition}
\newtheorem{definition}[theorem]{Definition}
\newtheorem{remark}[theorem]{Remark}
\renewcommand{\O}{\mathcal{O}}
\newcommand{\SK}{\mathscr{K}}
\newcommand{\SF}{\mathscr{F}}
\newcommand{\SC}{\mathscr{C}}
\newcommand{\evit}{\mathscr{E}}
\newcommand{\sing}{\mathscr{S}}
\newcommand{\N}{\mathcal{N}}
\newcommand{\w}{\mathbf{w}}
\newcommand{\m}{\mathbf{m}}
\newcommand{\M}{\mathbf{M}}
\DeclareMathOperator{\Rep}{\mathfrak{R}}
\providecommand{\ov}{\mathbf v}
\providecommand{\oq}{\mathbf q}
\newcommand{\SP}{\mathscr{P}}
\newcommand{\ol}{\mathfrak{O}}
\title{Distribution of orbits of unipotent groups on $S$-arithmetic homogeneous spaces}
\author{Keivan Mallahi-Karai}
\begin{document}
\maketitle
\begin{abstract}
We will prove an $S$-arithmetic version of a theorem of Dani-Margulis on the convergence of ergodic 
averages of a given bounded continuous function, when the initial point is outside certain compact subsets
of the singular set associated to the unipotent flow. 
\end{abstract}

\section{Introduction}

The goal of this note is to prove a version of a theorem of Dani and Margulis in 
an $S$-arithmetic context. In \cite{DM}, Dani and Margulis proved the following uniform version of Ratner's theorem:

\begin{theorem}\label{th:DM}\cite{DM}
Let $G$ be a connected Lie group, and $\Gamma$ be a lattice in $G$. Let $U=(u_t)$ denote
a one-parameter Ad-unipotent subgroup of $G$. Consider the data consisting of a bounded continuous function $\phi: G/\Gamma \to \RR$, a compact set $\SK \subseteq G/\Gamma$, and $ \epsilon>0$. Then there exists a finite number of proper closes subgroups $H_1, \dots, H_k$
such that $H_i \cap \Gamma$ is a lattice in $H_i$ for all $ 1 \le i \le k$, and compact 
sets $\SC_i \subseteq X(H_i, U)$, $1 \le i \le k$, such that for every compact set 
$\SF \subseteq \SK - \bigcup_{i=1}^{k} \SC_i\Gamma/\Gamma$, for all $x \in \SF$ and $T \gg 0$, 
the following holds:
\[ \left|  \frac{1}{T} \int_{[0,T]} \phi(u_t x) \ dt- \int_{G/\Gamma} \phi d\mu   \right| < \epsilon. \]
\end{theorem}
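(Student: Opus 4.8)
The plan is to reduce the statement, via Ratner's theorems, to a problem about the geometry of the singular set, and then to upgrade pointwise convergence to uniform convergence by combining the Dani--Margulis linearization technique with a compactness and dimension-induction argument.

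First I would recall the structure of the singular set. Let $\mathcal{H}$ be the countable collection of proper closed subgroups $H \subsetneq G$ for which $H \cap \Gamma$ is a lattice in $H$ and $X(H,U) \neq \emptyset$, and put $S(U) = \bigcup_{H\in\mathcal{H}} X(H,U)\Gamma/\Gamma$. By Ratner's measure classification and equidistribution theorems, for every $x \in G/\Gamma \setminus S(U)$ the orbit $(u_t x)$ is equidistributed with respect to $\mu$, so $\frac1T\int_{[0,T]}\phi(u_tx)\,dt \to \int_{G/\Gamma}\phi\,d\mu$ pointwise. Hence the content of the theorem is twofold: (i) this convergence must be made uniform over compact sets, and (ii) the countable family defining $S(U)$ must be replaced by a finite one $H_1,\dots,H_k$, at the cost of the prescribed error $\epsilon$ and after deleting fixed compact neighborhoods $\SC_i \subseteq X(H_i,U)$.

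The main tool for both points is linearization. For $H \in \mathcal{H}$ with $d_H = \dim H$, let $G$ act on $V_H = \bigwedge^{d_H}\mathfrak{g}$ by $\bigwedge^{d_H}\Ad$, fix $p_H \in \bigwedge^{d_H}\mathfrak{h}\setminus\{0\}$ representing $\mathfrak{h}$, and set $\eta_H(g) = \bigl(\bigwedge^{d_H}\Ad(g)\bigr)p_H$. Two facts are crucial: the orbit $\Gamma\cdot p_H$ is discrete in $V_H$ (because $H\cap\Gamma$ is a lattice in $H$), and $g \in X(H,U)$ is detected by $\eta_H(g)$ lying in the $U$-fixed subspace of $V_H$; consequently the sets $\SC_i$ can be produced as the $g\Gamma$ for which an initial segment of $t\mapsto \eta_H(gu_t)$ remains inside a fixed compact neighborhood of $\Gamma\cdot p_H$. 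Since $t \mapsto \eta_H(gu_t)$ is a polynomial map of degree bounded in terms of $\dim G$ only, it is $(C,\alpha)$-good; the Dani--Margulis basic lemma then yields the dichotomy that along $[0,T]$ either the whole segment stays in a slightly enlarged neighborhood of $\Gamma\cdot p_H$ (so $x$ lies in a compact subset of $X(H,U)\Gamma/\Gamma$, to be absorbed into some $\SC_i$) or the fraction of $t\in[0,T]$ for which $\eta_H(gu_t)$ is near $\Gamma\cdot p_H$ is at most $\epsilon'$. Summing over the finitely many translates $\gamma p_H$ of bounded norm (discreteness) and over the finitely many subgroups retained, any $x$ in a compact set disjoint from $\bigcup_i\SC_i\Gamma/\Gamma$ has its orbit spending all but an $\epsilon$-fraction of time in a compact ``generic region'' $\SK'$ obtained by deleting fixed neighborhoods of the relevant singular pieces; on $\SK'$, by compactness together with Ratner equidistribution and the Dani--Margulis nondivergence estimate (to rule out escape of mass), $\frac1T\int_{[0,T]}\phi(u_tx)\,dt$ is within $\epsilon$ of $\int\phi\,d\mu$, uniformly in $x$ and in $T\gg 0$.

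It remains to carry out the reduction to a finite family, which I would do by induction on $\dim H$. By compactness of $\SK$ and discreteness of each $\Gamma\cdot p_H$, only finitely many subgroups of maximal dimension have $X(H,U)$ meeting a bounded neighborhood of $\SK$ essentially; one deletes fixed compact neighborhoods $\SC_i$ of the corresponding pieces, and what is left over is controlled by the singular pieces of strictly lower-dimensional subgroups together with an $\epsilon$-fraction of time lost near the deleted neighborhoods, so the induction terminates since $\dim H \le \dim G$. I expect this last step to be the main obstacle: one must track the interaction between $X(H,U)$ and $X(H',U)$ for nested subgroups $H'\subsetneq H$, verify that the exceptional set produced at each inductive stage is genuinely covered by the singular set of lower-dimensional subgroups (not by a fresh uncontrolled set), and keep all neighborhoods and error constants coherent through the induction; this bookkeeping is the heart of \cite{DM}, and it is also the part whose $S$-arithmetic analogue requires the most care, since the Lie algebra, its exterior powers, the vectors $p_H$, and the discreteness of the orbits $\Gamma\cdot p_H$ must all be organized over the ring of $S$-integers and across the non-archimedean places.
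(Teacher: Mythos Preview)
This theorem is quoted from \cite{DM} and is not proved in the present paper; the paper only proves its $S$-arithmetic analogue, Theorem~\ref{main-DM}. So strictly speaking there is no ``paper's own proof'' to compare your proposal against. That said, your outline is a faithful high-level description of the original Dani--Margulis argument: linearization via a Chevalley-type representation (in \cite{DM} the exterior power of the adjoint), the polynomial/$(C,\alpha)$-good dichotomy giving either containment in a compact piece of $X(H,U)$ or small sojourn time near it, and an induction on $\dim H$ to cut down to finitely many subgroups.

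It is worth noting, however, that the architecture of the proof the paper gives for Theorem~\ref{main-DM} is organized somewhat differently from your sketch, and this difference already appears in \cite{DM}. Rather than directly assembling uniform equidistribution on a ``generic region'' $\SK'$ from Ratner plus nondivergence, the paper (following \cite{DM}) argues by contradiction: it builds an increasing exhaustion $E_1\subseteq E_2\subseteq\cdots$ of the singular set by sets in the class $\evit$, together with neighborhoods $\Omega_i$ on which the sojourn-time bound \eqref{time-in-sing} holds; a hypothetical sequence of bad points $x_i$ is then translated along the flow to points $y_j\in\SK\setminus\bigcup_{i\le j}\Omega_i$, and any limit $y$ is forced to be generic, contradicting Theorem~\ref{limit}. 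The linearization and dimension induction you describe enter one level down, in producing the sets $E_i$ and the estimates \eqref{time-in-sing} (this is the content of the theorem in Section~3 and the subsequent proposition), not in the top-level uniformity argument. Your remark that ``this bookkeeping is the heart of \cite{DM}'' is correct, but in practice the bookkeeping is packaged exactly this way: an exhaustion plus a limit/contradiction, rather than a single direct estimate over a generic region.
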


Here, and in the rest of the article, for a closed subgroups $H$ and $W$ of a Lie group $G$, the set $X(H,W)$ is defined by $X(H,W)=\{ g \in G: g^{-1}Wg  \subseteq H \}$. It is clear that if $H \cap \Gamma$ is
a lattice in $H$, then for any $g \in X(H,W)$ the orbit $Wg\Gamma$ is included in $gH\Gamma$, which
is a closed set carrying a finite $H$-invariant measure. Such points are called 
\newterm{singular points}. The set of singular points will be denoted by $\Sing(W)$. The complement
in $G/\Gamma$ of the set of singular points is called the set of genetic points and is denoted
by $\Gen(W)$. In \cite{DM}, it is shown that if $W$ is connected and generated by Ad-unipotent 
elements, then $\Sing(W)$ is the union of $X(H,W)\Gamma/\Gamma$, where $H$ runs over all
closed connected subgroups of $G$, such that $H \cap \Gamma$ is a lattice in $H$, and 
$\Ad(H \cap \Gamma)$ is Zariski-dense in $\Ad(H)$. 

Perhaps one of the striking features of this theorem is that for the equidistribution up to 
an error of size $ \epsilon$ to be achieved, only a compact subset of a union of finitely many 
singular sets need to be removed. In other words, all but finitely many singular orbits behave
as dense ones, for a given test function $\phi$ and error tolerance $ \epsilon$. 
In \cite{DM}, Dani and Margulis use this theorem to give asymptotically exact lower bounds for the 
number of integer vectors in a given ball satisfying $Q(v)\in (a,b)$, where $Q$ varies over
a compact family of indefinite quadratic forms. In accordance with what was said before, all but finitely many \emph{rational} quadratic forms obey the asymptotic behavior for a given tolerance $ \epsilon$. 

Note that $G$ is an arbitrary (and not necessarily algebraic, or even linear) Lie group, and 
$\Gamma$ does not have to be arithmetic. The version of the theorem proven in this paper 
involves $S$-arithmetic groups, which are sufficient for many applications. It turns out that in this setting, a more restricted class of algebraic subgroups can appear as the orbit closures. This class was introduced in \cite{GT} under the name class $\F$ (see definition \ref{cf}). To state the theorem we also need a substitute for the domain of the unipotent flow (or $S$-adic time). The related definitions are given in Section \ref{pre}.

\begin{theorem}\label{main-DM}
Let $\GG$ be a $k$-algebraic group, $G=\GG(k_S)$, and $\Gamma$ an $S$-arithmetic lattice in $G$, and $\mu$ denote the 
$G$-invariant probability measure on $G/\Gamma$. Let $U=\{ (u_v(t_v))_{v \in S}| t_v \in k_v \}$ be a one-parameter unipotent  
$k_T$-subgroup of $G$, and let $\phi: G/ \Gamma \to \RR$ be a bounded continuous function. Let $\SK$ be a compact subset of
$G/\Gamma$, and let $ \epsilon>0$. Then there exist finitely many proper subgroups $\GP_1, \dots, \GP_k$ of class $\F$,
and compact subsets $C_i \subseteq X(P_i, U)$, where $P_i= \GP_i(k_S)$, $1 \le i \le k$, such that the following holds:
for any compact subset $\SF$ of $\SK - \cup_i C_i\Gamma/\Gamma$ there exists $\T_0$ such that for all $x \in \SF$ and 
$\T \succ \T_0$, we have
\[ \left|   \frac{1}{\lambda_S(I(\T) )} \int_{I(\T)} \phi( u(t)x) d\lambda_{T}(t) - \int_{G/\Gamma} \phi d\mu       \right| \le \epsilon. \]
\end{theorem}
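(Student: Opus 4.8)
\emph{Strategy.} The plan is to argue by contradiction, using three ingredients. \emph{(i)} The measure classification theorem for unipotent actions over products of local fields (Ratner; Margulis--Tomanov): every ergodic $U$-invariant probability measure on $G/\Gamma$ is the homogeneous measure carried by a closed orbit; and, by the $S$-arithmetic description of the possible orbit closures (which for arithmetic $\Gamma$ forces the relevant subgroups into the class $\F$, see \cite{GT}), such a measure other than $\mu$ is supported on some $g\GP(k_S)\Gamma/\Gamma$ with $\GP$ a proper $k$-subgroup of class $\F$, $\GP(k_S)\cap\Gamma$ a lattice in $\GP(k_S)$, and $g\in X(\GP(k_S),U)$; moreover, for fixed $\GG$ and $\Gamma$ there are only \emph{countably many} such $\GP$ up to the obvious equivalences, giving an enumeration $\GP_1,\GP_2,\dots$ with $\Sing(U)=\bigcup_j X(\GP_j(k_S),U)\Gamma/\Gamma$. \emph{(ii)} The non-divergence of unipotent flows on $S$-arithmetic homogeneous spaces (Kleinbock--Tomanov): for every compact $\SK\subseteq G/\Gamma$ and $\delta>0$ there is a compact $Q\subseteq G/\Gamma$ with $\lambda_T(\{t\in I(\T):u(t)x\notin Q\})<\delta\,\lambda_S(I(\T))$ for all $x\in\SK$ and all $\T$; hence the normalized orbit measures $\mu_{x,\T}:=\lambda_S(I(\T))^{-1}\int_{I(\T)}\delta_{u(t)x}\,d\lambda_T(t)$ lose no mass in weak-$*$ limits when the base points stay in a compact set. \emph{(iii)} The linearization technique of \cite{DM}, carried out over $k_S=\prod_{v\in S}k_v$, described next.

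\emph{Linearization.} Fix one of the subgroups; write $\GP=\GP_j$, $P=\GP(k_S)$, $\mathfrak p=\lie(\GP)$, $d_0=\dim\mathfrak p$, and consider the representation of $G$ on $V=\bigwedge^{d_0}\mathfrak g$ with its $k_T$-rational structure, together with a nonzero vector $p_{\GP}\in\bigwedge^{d_0}\mathfrak p$ defined over $k_T$. As in \cite{DM}: (a) $\Gamma p_{\GP}$ is discrete in $V$ (using that $P\cap\Gamma$ is a lattice in $P$ and $\Ad(P\cap\Gamma)$ is Zariski dense in $\Ad P$); (b) since $U$ consists of unipotent elements, $g\in X(P,U)$ if and only if $g\cdot p_{\GP}$ is fixed by $U$ (one inclusion uses that a unipotent element normalizing $P$ lies in $P$, again because $P\cap\Gamma$ is a lattice); and (c) the key dichotomy: because $U$ is a one-parameter unipotent $k_T$-group, for each $v\in S$ the curve $t_v\mapsto u_v(t_v)\,g\,p_{\GP}$ is polynomial of degree bounded only in terms of $(\GG,U)$, so the $(C,\alpha)$-good property of polynomials over $k_v$ — uniform in the place and in dilations of the box — shows that a trajectory $t\mapsto u(t)x$ which lies inside a given neighbourhood $\Omega$ of $X(P,U)\Gamma/\Gamma$ for more than an $\alpha$-fraction of a box $I(\T)$ must, in the linear picture, stay within a bounded region near the $U$-fixed vectors throughout $I(\T)$. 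Combining this with the discreteness in (a), the combinatorial control of the relevant $\gamma\in\Gamma$ from \cite{DM}, and a recursion on $\dim$ (at each step the orbit is either pinned to a bounded — hence, by compactness of $\SK$ and discreteness, compact — part of $X(P,U)\Gamma/\Gamma$, or captured by a \emph{proper} algebraic subgroup of $\GP$), one obtains: for every compact $\SK\subseteq G/\Gamma$ and every $\alpha>0$ there are a finite index set $J=J(\GP,\SK,\alpha)$ of the subgroups above, compact sets $\SC_j\subseteq X(\GP_j(k_S),U)$ for $j\in J$, and a neighbourhood $\Omega$ of $X(P,U)\Gamma/\Gamma$ such that every $x\in\SK$ with $\mu_{x,\T}(\Omega)\ge\alpha$ for some sufficiently large box $I(\T)$ lies in $\bigcup_{j\in J}\SC_j\Gamma/\Gamma$.

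\emph{Assembling the proof.} Suppose the theorem fails. For each $j$ fix an increasing exhaustion $X(\GP_j(k_S),U)=\bigcup_n C_j^{(n)}$ by compact sets such that every compact subset of $X(\GP_j(k_S),U)$ is contained in some $C_j^{(n)}$. Negating the conclusion for the finite family $\GP_1,\dots,\GP_n$ with compacts $C_1^{(n)},\dots,C_n^{(n)}$ and pushing the threshold to infinity yields $x_n\in\SK$ with $x_n\notin C_j^{(n)}\Gamma/\Gamma$ for all $j\le n$, boxes $\T_n$ of unboundedly large size, and $\bigl|\mu_{x_n,\T_n}(\phi)-\int_{G/\Gamma}\phi\,d\mu\bigr|>\epsilon$. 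Passing to a subsequence, $x_n\to x\in\SK$ and $\mu_{x_n,\T_n}\to\nu$ weak-$*$; by (ii) $\nu$ is a probability measure with $\nu(\phi)=\lim_n\mu_{x_n,\T_n}(\phi)$, so $|\nu(\phi)-\int\phi\,d\mu|\ge\epsilon$, and since the boxes $I(\T_n)$ are asymptotically invariant under translation by the abelian group $U$, $\nu$ is $U$-invariant. Writing $\nu=a\,\mu+(1-a)\nu'$ with $\nu'$ supported on the proper singular part, $\epsilon\le|\nu(\phi)-\int\phi\,d\mu|\le(1-a)\,2\|\phi\|_\infty$, so $\nu$ assigns mass $\ge\epsilon/(2\|\phi\|_\infty)$ to $\Sing(U)$; by (i) and countable additivity there are an index $j_0$ and $\eta>0$ with $\nu\bigl(X(\GP_{j_0}(k_S),U)\Gamma/\Gamma\bigr)\ge\eta$, realized by an ergodic component on a closed orbit inside that set. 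Take $J$, $\{\SC_j\}_{j\in J}$, $\Omega$ from the linearization for $\GP_{j_0}$, $\SK$ and $\alpha=\eta/2$; since $\Omega$ is a neighbourhood of $X(\GP_{j_0}(k_S),U)\Gamma/\Gamma$ we get $\nu(\Omega)\ge\eta$, hence $\mu_{x_n,\T_n}(\Omega)\ge\eta/2$ for all large $n$, hence $x_n\in\bigcup_{j\in J}\SC_j\Gamma/\Gamma$ for all large $n$. Choosing $N\ge\max J$ so large that $\SC_j\subseteq C_j^{(n)}$ for every $j\in J$ and $n\ge N$, we obtain, for large $n$ in our subsequence, $x_n\in\bigcup_{j\in J}C_j^{(n)}\Gamma/\Gamma\subseteq\bigcup_{j\le n}C_j^{(n)}\Gamma/\Gamma$, contradicting the choice of $x_n$. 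This contradiction establishes the theorem.

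\emph{Main obstacle.} The hard part is the linearization step (iii): running the argument simultaneously at the archimedean and non-archimedean places of $S$, and extracting from it the required \emph{finiteness}. The archimedean contribution is essentially as in \cite{DM}; the non-archimedean places require the $(C,\alpha)$-good estimates for polynomials over $k_v$ and a careful treatment of the discrete $\Gamma$-orbits of the vectors $p_{\GP_j}$ in $V(k_S)$, and one must check that the subgroups produced by the dimension recursion remain within the class $\F$. This is precisely where the passage from arbitrary algebraic subgroups to the class $\F$ is genuinely used, and where the $S$-arithmetic statement diverges from the real case.
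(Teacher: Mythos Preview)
Your overall strategy—contradiction, non-divergence, weak-$*$ limit, Ratner decomposition, linearization—is sound, and it is close in spirit to the Mozes--Shah argument. It differs structurally from the paper's proof in two ways. First, the paper argues by induction on $|S|$: it proves the single-place case and then combines places by a Fubini argument, whereas you treat all places at once. Second, for the single-place case the paper does \emph{not} analyze the limiting measure $\nu$ directly; instead it builds an increasing exhaustion $E_i\in\evit$ of the singular set together with neighbourhoods $\Omega_i$ (via the linearization theorem of Section~3 and the escape-from-tubes Proposition), uses these to shift each $x_j$ along its orbit to a point $y_j\in\SK\setminus\bigcup_{i\le j}\Omega_i$, and then applies Theorem~\ref{limit} to the generic limit $y=\lim y_j$. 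Your route avoids Theorem~\ref{limit} entirely.

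There is, however, a genuine gap in your execution. The linearization package you invoke claims: for $\GP$, $\SK$, $\alpha$ there exist $J$, $\SC_j\subseteq X(P_j,U)$, and $\Omega$ such that \emph{every} $x\in\SK$ with $\mu_{x,\T}(\Omega)\ge\alpha$ for some sufficiently large $\T$ lies in $\bigcup_{j\in J}\SC_j\Gamma/\Gamma$. What linearization actually gives (paper's Section~3 theorem together with the subsequent Proposition) is the weaker statement: for every \emph{compact} $\SF\subseteq\SK\setminus\bigcup_{j\in J}\SC_j\Gamma/\Gamma$ there exists $\T_0(\SF)$ such that $\mu_{x,\T}(\Omega)<\alpha$ for all $x\in\SF$ and $\T\succ\T_0(\SF)$. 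The threshold $\T_0$ is \emph{not} uniform over the (non-compact) complement. The dichotomy itself is threshold-free, but its alternative~(1) is only ``trapped in a tube $\Phi$'', which places $\eta(g\gamma)$ in a neighbourhood of $A_\GP$, not in $A_\GP$; so it does not force $g\gamma\in X(P,U)$.

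This creates a circularity in your argument. You first choose $x_n,\T_n$ and pass to a limit $\nu$; only then do you identify $\GP_{j_0}$, $\eta$, and the linearization data $J,\{\SC_j\},\Omega$. To conclude $\mu_{x_n,\T_n}(\Omega)<\alpha$ from the correct linearization statement you would need $\T_n\succ\T_0(\SF_n)$, but $\T_0(\SF_n)$ depends on $\Omega$, which depends on $\nu$, which depends on the $\T_n$ already chosen. If you go back and re-choose $x_n',\T_n'$ with $\T_n'\succ\T_0(\SF_n)$, the new limit $\nu'$ may concentrate on a different $\GP_{j_0'}$, and the argument does not close. The paper breaks this circularity by fixing, \emph{in advance} and independently of any limit, a single nested sequence $E_i\uparrow\Sing(U)$ with companion neighbourhoods $\Omega_i$, so that avoiding $E_{i+1}$ controls time in $\Omega_i$; this is what allows the shifted points $y_j$ to accumulate on a genuinely generic point.
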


The general line of argument is similar to the one in \cite{DM}. There are a number of places in which technicalities arise that need to be handled differently.

\section{Preliminaries}\label{pre}
In this section we will introduce some notation and recall a number of theorems from 
\cite{GT} that will be used in this paper. 

Let $k$ be a number filed, i.e., a finite extension of $\QQ$, and let $v$ be a valuation of 
$k$, and $| \cdot |_v$ denoted the associated norm. A standing assumption is this paper is that $v$ is normalized, i.e. $v(k^{\ast})=\ZZ$. The completion of $k$ with respect to $v$ is denoted by $k_v$. 
The set of elements of $x \in k_v$ satisfying $|x|_v \le 1$ is called the ring of integers of $k_v$.
Note that $(k_v, +)$ is an abelian locally compact group. The Haar measure on $(k_v,+)$ normalized such that it assigns $1$ to the ring of integers of $k_v$ is denoted by $\lambda_v$. 
 
Let us mention in passing that $k_v$ contains a competition of the $p$-adic field $\QQ_p$, where
$p=p(v)$ is a prime when $v$ is non-archimedean and $p= \infty$ when $v$ is archimedean.  
Let $S$ be a finite set of normalized valuations of $k$ containing the set $S_{ \infty}$ of 
Archimedean ones. We write $S_f = S - S_f$, and $k_T= \oplus_{v \in T} k_v$ for any $T \subseteq S$. We will also denote by $\O_S$ (or simply $\O$) the ring 
of $S$-integers in $k$. Likewise, given a subset $T \subseteq S$, we will fix the Haar measure $\lambda_T= \prod_{v \in T} \lambda_v$ on $K_T$. We also equip $K_T$ with the supremum norm
\[ \|(x_v) \| = \sup_{v \in T} |x_v|_v. \]

Throughout this paper, we will use bold upper scale letters (such $\GG$, $\GP$, etc.) for algebraic groups defined over $k$. The $k_S$ points of these groups are denoted by the corresponding letter case (such as $G$, $P$, etc.). Having fixed a $k$-algebraic group $G$, and a set $S$ of places as above, we denote by $\Gamma$ an $S$-arithmetic subgroup of $G$. This means that $\Gamma$ and $\GG( \O_S)$ are commensurable subgroups of $\GG(k)$. When $G/\Gamma$ is a lattice, we denote 
by $\mu$ the unique $G$-invariant probability measure on $G/\Gamma$.

The $S$-arithmetic analogue of Theorem \ref{th:DM} will naturally involve averaging $\phi(u_tx)$, where $t$ ranges over an $S$-interval. Let us give fix some definitions. For a subset $T \subseteq S$, 
let $\T =(T_v)_{v \in T} \in (\RR^+)^T$, and $a=(a_v)_{v \in T} \in k_T$. The $T$-interval centered at $a$ of radius $\T $ is the subset of $k_T$ defined by 
\[ I(a,\T)=\{ (x_v)_{v \in T} \in k_T: |x_v-a_v|_v \le \T(v), \, \forall v \in T \}. \]
For a fixed $v \in S$ and $r>0$, we define the $k_v$-interval
\[ I_v(r)= \{ x \in k_v: |x| \le r \}. \]
Set $\T=(T_v)_{v \in S}$, where $T_v$ is an integral power of $v$ for $v \in S_f$ and a real number for $v \in S_{ \infty}$. Call $\T$ an $S$-time. The magnitude of an $S$-time is defined by 
$$ |\T| = \prod_{v \in S} T_v$$
Note that $\lambda_T(I(a,\T))=|T|$ for all $a \in k_T$. We will also write $m(\T)=\min_{v \in S} T_v$, where $T_v$ is considered as a real number. The set of all $S$-time vectors is denoted by $\TT_S$. 
For $\T=(T_v), \T'=(T'_v) \in \TT_S$, we write $\T \succ \T'$ if $T_v \ge T'_v$ for all $v \in S$. 
We write $\T_i =(T_{v,i})_{ v\in S} \to \infty$ if $T_{v,i} \to \infty$ for each $v \in S$. 
For $v \in S_f$, assume that $ \varpi \in k_v$ is such that $v(\varpi)=-1$. For an interval $L=I_v(r)$, we will write
$ \hat{L}= I_v( \varpi r)$, so that $\lambda_v( \hat{L})= |\varpi|_v \lambda_v(L)$.

\begin{remark}
Let $v$ be a non-archimedean place. The ultrametric property of the norm implies that if $b \in I_v(a,r)$, we have $I_v(b,r)=I_v(a,r)$. This in particular implies that if $J_1$ and $J_2$ are two intervals with a non-empty intersection, then $J_1 \subseteq J_2$ or $J_2 \subseteq J_1$. 
\end{remark}

 We will need the following easy lemma.

\begin{lemma}\label{}
Let $L_1, \dots, L_r \subseteq k_v$ be disjoint intervals. Then 
\[ \sum_{i=1}^r \lambda_v( L_i) \le \lambda_v( \bigcup_{i=1}^{r} \hat{L_i}). \]
\end{lemma}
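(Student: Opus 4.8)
The plan is to reduce to the case where the intervals $L_1,\dots,L_r$ are nested-free and then exploit the ultrametric structure recalled in the preceding remark. Since the $L_i$ are disjoint and, by the remark, any two $v$-adic intervals are either disjoint or one contains the other, no containments occur among the $L_i$ unless one is empty, so we may assume all $L_i$ are nonempty and pairwise disjoint. The key observation is that passing from $L$ to $\hat L = I_v(\varpi r)$ enlarges $L$ by exactly the factor $|\varpi|_v = p^{f}$ (where $p = p(v)$ and $f$ is the residue degree), i.e. $\lambda_v(\hat L) = |\varpi|_v \lambda_v(L)$, and that $\hat L$ is again a single $v$-adic interval containing $L$.

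First I would handle the simplest informative case $r = |\varpi|_v$ (or more generally group the $L_i$ by radius). Because all the $L_i$ have been shrunk or it is cleaner to argue: partition $\{1,\dots,r\}$ according to the common radius; within a fixed radius $\rho$, the intervals $I_v(a_i,\rho)$ are the distinct cosets of $I_v(0,\rho)$ they meet, and $\hat L_i = I_v(a_i, |\varpi|_v \rho)$ is the unique interval of radius $|\varpi|_v\rho$ containing $L_i$. Two such enlargements $\hat L_i, \hat L_j$ either coincide or are disjoint (again by the remark). Thus among the $\hat L_i$ of a fixed enlarged radius, each distinct one absorbs some number $m$ of the original $L_i$, and since $|\varpi|_v \ge$ the number of radius-$\rho$ cosets inside a radius-$|\varpi|_v\rho$ ball, we get $m \le |\varpi|_v$, hence $m\,\lambda_v(L_i) \le \lambda_v(\hat L_i)$. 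Summing over the distinct enlarged intervals within this radius class gives $\sum \lambda_v(L_i) \le \sum \lambda_v(\hat L_i)$ over that class.

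The genuine issue is handling intervals of \emph{different} radii simultaneously, since the union $\bigcup \hat L_i$ on the right-hand side may have overlaps between an $\hat L_i$ of small radius and an $\hat L_j$ of large radius, and we must not double count on the right. To deal with this I would induct on the number of distinct radii, or more efficiently order the $L_i$ by decreasing radius and greedily build the union: process the largest intervals first, and observe that when a smaller $\hat L_i$ is encountered, either it is already contained in the union built so far (in which case it contributed nothing new, but its $L_i$ is also already covered by a strictly larger $\hat L_j$ whose slack we have not yet exhausted — here one must be careful), or $\hat L_i$ is disjoint from everything processed so far, in which case the previous paragraph's counting applies locally. The cleanest formulation is: since the $\hat L_i$, as one runs over all $i$, form a finite family of $v$-adic intervals, their union is a disjoint union of maximal ones $\hat L_{i_1},\dots,\hat L_{i_s}$; each maximal $\hat L_{i_\ell}$ contains all the $L_i$ with $\hat L_i \subseteq \hat L_{i_\ell}$, these $L_i$ are disjoint subintervals of $\hat L_{i_\ell}$, and a packing/counting argument (the total measure of disjoint subintervals of an interval $J$, each a $v$-adic ball, is at most $\lambda_v(J)$ — this is immediate since disjoint $v$-adic subballs of $J$ have measures summing to at most $\lambda_v(J)$) gives $\sum_{\hat L_i \subseteq \hat L_{i_\ell}} \lambda_v(L_i) \le \lambda_v(\hat L_{i_\ell})$. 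Summing over $\ell$ yields the claim. The main obstacle, then, is purely bookkeeping: correctly organizing the $\hat L_i$ into their maximal members and verifying that every $L_i$ is charged to exactly one maximal $\hat L_{i_\ell}$; once that is set up, the inequality $\sum_{\text{disjoint }v\text{-adic balls} \subseteq J}\lambda_v \le \lambda_v(J)$ closes it immediately.
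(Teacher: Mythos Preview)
The paper does not actually supply a proof of this lemma; it is announced as an ``easy lemma'' and left without argument, so there is no proof of the paper's to compare against.

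Your final formulation---decompose $\bigcup_i \hat L_i$ into its pairwise disjoint maximal members $\hat L_{i_1},\dots,\hat L_{i_s}$, observe that each $L_i$ lies in a unique such maximal ball, and use that disjoint subsets of $\hat L_{i_\ell}$ have total measure at most $\lambda_v(\hat L_{i_\ell})$---is correct. However, it is more elaborate than the situation calls for. Since the $L_i$ are pairwise disjoint and $L_i \subseteq \hat L_i$ for every $i$ (because $|\varpi|_v>1$), one has in one line
\[
\sum_{i=1}^r \lambda_v(L_i)\;=\;\lambda_v\Bigl(\,\bigcup_{i=1}^r L_i\Bigr)\;\le\;\lambda_v\Bigl(\,\bigcup_{i=1}^r \hat L_i\Bigr),
\]
the equality by disjointness and the inequality by monotonicity of the measure. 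No ultrametric dichotomy, grouping by radius, coset counting, or greedy ordering is needed; this is presumably why the paper omits the proof. The exploratory middle portion of your write-up can simply be deleted.
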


Note that $\GG(k_v)$ is naturally embedded in $G= \prod_{v \in S } \GG(k_v)$. By a one-parameter $k_v$-subgroup $U_v= \{u_v(t)\}$ of $G$ we mean a non-trivial $k_v$-rational homomorphism $u_v: k_v \to \GG(k_v)$. Let $T \subseteq S$ and for each $v \in T$, let $U_v=\{u_v(t_v): t_v \in k_v \}$ be a one-parameter unipotent $k_v$-subgroup. Then the 
direct sum $u_T: k_T \to G$ defined by $u_T((t_v)_{v \in T})= ( u_v(t_v) )_{v \in T}$ is called a one-parameter unipotent $k_T$-subgroup of $G$. One of the key properties of the unipotent subgroups
is the non-divergence properties of the unipotent flow that plays an essential role in the 
measure classification results for the actions of these groups. The following $S$-arithmetic version of a quantitative non-divergence theorem will later be needed in this paper:

\begin{theorem}[\cite{GT},Theorem 3.3]\label{recurrence}
Let $\GG$ be a $k$-algebraic group, $G=\GG(k_S)$, and $\Gamma$ an $S$-arithmetic lattice in $G$, and $\mu$ denote the 
$G$-invariant probability measure on $G/\Gamma$. Let $U=\{ u_v(t_v)| t_v \in k_v \}$ be a one-parameter unipotent  
$k_T$-subgroup of $G$. Let $ \epsilon >0$ and $\SK \subseteq G/\Gamma$ be a compact set. Then there exists 
a compact subset $\SK$ such that for any $x \in \SK_1$ and any $T$-interval $I(\T)$ in $k_T$, we have 
\[ \frac{1}{\lambda(I(\T))} \lambda_T \{ t \in I(\T)|u(t)x \not\in \SK_1 \} < \epsilon . \]
\end{theorem}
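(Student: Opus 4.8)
The plan is to adapt the Kleinbock--Margulis quantitative non-divergence estimate (a sharpening of Margulis's and Dani's arguments, cf. \cite{DM}) to the $S$-arithmetic setting, essentially following \cite{GT}. Three ingredients are needed: (a) a dictionary translating ``$u(t)x$ is deep in the cusp'' into ``some vector of an $\O_S$-lattice attached to $x$, living in an exterior power of a fixed representation, is short''; (b) the fact that, along a one-parameter unipotent subgroup $u(t)=(u_v(t_v))_{v\in T}$, such a norm is a supremum over $v\in T$ of functions polynomial of bounded degree in $t_v$, hence a $(C,\alpha)$-good function on the $T$-interval (meaning: the $\lambda_T$-measure of $\{t\in J:|f(t)|<\eta\}$ is at most $C(\eta/\sup_J|f|)^{\alpha}\lambda_T(J)$ for every $T$-interval $J$); and (c) a Besicovitch/tree covering argument on $k_T$ turning (b), together with a ``not everywhere small'' lower bound coming from the compactness of $\SK$, into the asserted measure estimate.

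First I would fix a faithful $k$-rational representation $\GG\hookrightarrow\SL_N$ so that, after passing to a finite-index subgroup of $\Gamma$ (harmless: a compact set downstairs lifts to a compact set upstairs, and bad-time sets only shrink under the covering projection, since $u(t)x=\pi(u(t)x')$), $\Gamma$ stabilizes the lattice $\O_S^N\subseteq k_S^N$. Then $g\Gamma\mapsto\Lambda_g:=g\,\O_S^N$ realizes $G/\Gamma$ as a family of $\O_S$-lattices in $k_S^N$, and the $S$-arithmetic Mahler criterion says that a subset of $G/\Gamma$ is relatively compact if and only if the attached lattices admit a uniform positive lower bound on the norms of their nonzero vectors. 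So it suffices to produce, given $\SK$ and $\epsilon$, a number $\delta>0$ such that for every $x\in\SK$ and every $T$-interval $I=I(\T)$ (which we take to contain the origin),
\[ \lambda_T\{\,t\in I:\ \Lambda_{u(t)x}\ \text{has a nonzero vector of norm}\ <\delta\,\}\ <\ \epsilon\,\lambda_T(I); \]
one then lets $\SK_1$ be the corresponding ``$\delta$-thick'' compact subset of $G/\Gamma$, which contains $\SK$ in its interior once $\delta$ is small enough.

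For (b): each $u_v:k_v\to\GG(k_v)\subseteq\SL_N(k_v)$ has matrix entries polynomial in $t_v$ of degree at most some $d=d(N)$, so the coordinates of $u_v(t_v)w$, for $w\in\bigwedge^{j}k_v^N$, are polynomials of degree $\le d$; hence $t_v\mapsto\|u_v(t_v)w\|_v$ is $(C_0,\alpha_0)$-good on balls in $k_v$ with $C_0,\alpha_0$ depending only on $d$ and the place (over non-archimedean $k_v$ this rests on bounding the number of zeros of a bounded-degree polynomial in a ball), and taking $\sup_{v\in T}$ shows $t\mapsto\|u(t)w\|$ is $(C_1,\alpha_1)$-good on every $T$-interval. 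Now let $\mathcal P(x)$ be the countable family of primitive $\O_S$-submodules of $\Lambda_x$ and $w_\Delta\in\bigwedge^{\rk(\Delta)}k_S^N$ the associated volume multivector. Since $x\in\SK$ there is $\delta_0=\delta_0(\SK)>0$ with $\|w_\Delta\|\ge\delta_0$ for every such $\Delta$ and every $x\in\SK$, and since $0\in I$ we get $\sup_{t\in I}\|u(t)w_\Delta\|\ge\|w_\Delta\|\ge\delta_0$. Feeding this, the $(C_1,\alpha_1)$-goodness, and the local finiteness of $\{\Delta:\|u(t)w_\Delta\|<\delta_0\}$ at each fixed $t$ into the covering proposition yields, for all $\delta\le\delta_0$,
\[ \lambda_T\{\,t\in I:\ \Lambda_{u(t)x}\ \text{has a nonzero vector of norm}\ <\delta\,\}\ \le\ C_2\,(\delta/\delta_0)^{\alpha_1}\,\lambda_T(I), \]
and choosing $\delta$ with $C_2(\delta/\delta_0)^{\alpha_1}<\epsilon$ completes the argument.

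The hardest part is the covering step giving the last display: one must pass from ``each single volume multivector is somewhere of size $\ge\delta_0$ on $I$'' to ``for most $t\in I$ no vector of $\Lambda_{u(t)x}$ whatsoever is shorter than $\delta$'', simultaneously over the infinitely many primitive submodules and over all ranks. The standard route is an induction on $\rk(\Delta)$ combined with a Besicovitch/tree covering of the set where the shortest vector is too small, and this is precisely where the ultrametric packing lemma stated just above enters (a disjointified family of non-archimedean intervals has total measure at most $\lambda_v(\bigcup_i\hat{L_i})$), used to sum the local contributions. Carrying all of this out uniformly over the mixed product space $k_T$ — archimedean balls together with ultrametric balls — is the only genuinely $S$-arithmetic difficulty; the remainder is formally the argument of \cite{DM} and \cite{GT}.
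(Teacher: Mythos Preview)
The paper does not supply its own proof of this statement at all: Theorem~\ref{recurrence} is quoted verbatim as \cite[Theorem 3.3]{GT} and used as a black box. So there is nothing in the paper to compare your argument against.

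That said, your sketch is the correct outline of the proof one finds in \cite{GT} (and, in a closely related form, in the work of Kleinbock--Tomanov): embed $\GG$ in $\SL_N$, pass to $\O_S$-lattices, invoke the $S$-arithmetic Mahler criterion, observe that along a one-parameter unipotent the exterior-power norms are $(C,\alpha)$-good on $T$-intervals because they are polynomial in each coordinate, and then run the Kleinbock--Margulis covering/induction-on-rank machine. One small point to be careful about: you write ``which we take to contain the origin'' and then use $\sup_{t\in I}\|u(t)w_\Delta\|\ge\|w_\Delta\|\ge\delta_0$. The theorem as stated is for an arbitrary $T$-interval, so you should say explicitly how you reduce to $0\in I$ (translate the center into the base point, $x\mapsto u(a)x$, and enlarge $\SK$ to absorb this shift using that the conclusion is uniform over compacta), or else run the $(C,\alpha)$-good argument directly on the given interval and obtain the lower bound from the compactness of $\SK$ via a different route. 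Apart from this, your proposal matches the intended argument.
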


\begin{definition}\label{cf}
A connected $k$-algebraic subgroup $\GP$ of $\GG$ is a subgroup of class $\F$ relative to $S$ if for each proper normal 
$k$-algebraic subgroup $\GQ$ of $\GP$ there exists $v \in S$ such that $(\GP/\GQ)(k_v)$ contains a non-trivial unipotent element. 
\end{definition}

Let $\Gamma$ be an $S$-arithmetic lattice in $G$. 
If $\GP$ is a subgroup of class $\F$ in $\GG$ then for any subgroup $P'$ of finite index in $\GP(k_S)$, 
we have $P' \cap \Gamma$ is an $S$-arithmetic lattice in $P'$. 
The following theorems have been proven in \cite{GT}.

\begin{proposition}[\cite{GT}, Theorem 4.2] \label{small}
Let $M \subseteq k_v^m$ be Zariski closed. Given a compact set $A \subseteq M$ and $ \epsilon>0$, there exists 
a compact set $B \subseteq M$ containing $A$ such that the following holds: for a compact neighborhood $\Phi$ of
$B$ in $k_v^m$, there exists a neighborhood $\Psi$ of $A$ in $k_v^m$ such that for any one-parameter unipotent subgroup $\{u(t) \}$ in $\GL_m(k_v)$, and any $\w \in k_v^m - W_0$, and any interval $I \subseteq k_v$ containing $0$, we have 
\[ \lambda_v \{ t \in I: u(t) \w  \in \Psi \} \le \epsilon \cdot \lambda_v \{ t \in T: u(t) \w \in \Phi \}. \]
\end{proposition}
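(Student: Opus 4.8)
The plan is to exploit that orbit maps of a unipotent group are polynomial of bounded degree and then to reduce the geometric assertion to the $(C,\alpha)$-good property of polynomials over the local field $k_v$. One writes $u(t)=\exp(tX)$ with $X\in\gl_m(k_v)$ nilpotent, so the entries of $u(t)$ are polynomials in $t$ of degree $<m$; hence for every $\w\in k_v^m$ the curve $\gamma_\w\colon t\mapsto u(t)\w$ is polynomial of degree $<m$, and $f\circ\gamma_\w$ is polynomial of degree $<mD$ for any polynomial $f\colon k_v^m\to k_v$ of degree $\le D$. Since all degrees occurring are bounded in terms of $m$, the estimates below are uniform over all $\{u(t)\}$. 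The basic input is the classical $(C,\alpha)$-good property: for each $N$ there are $C,\alpha>0$ with $\lambda_v\{t\in I:|p(t)|_v\le\mu\|p\|_I\}\le C\mu^{\alpha}\lambda_v(I)$ for every polynomial $p$ over $k_v$ of degree $\le N$, every interval $I\subseteq k_v$, and every $0<\mu\le1$, where $\|p\|_I=\sup_{t\in I}|p(t)|_v$; over a non-archimedean $k_v$ this is proved as over $\RR$, with the ultrametric dichotomy recorded in the Remark above and the dilation bound $\|p\|_{\widehat L}\le|\varpi|_v^{\deg p}\|p\|_L$ in place of connectedness. I would then extract two consequences, with the same uniformity: (i) if $\|p\|_I\ge\eta_2$ then $\lambda_v\{t\in I:|p(t)|_v\le\eta_1\}\le C(\eta_1/\eta_2)^{\alpha}\lambda_v(I)$ for $0<\eta_1\le\eta_2$; and (ii) if $F\colon I\to k_v^d$ has polynomial coordinates of degree $\le N$, $a\in k_v^d$, $0<r\le\rho$, and $\sup_{t\in I}\|F(t)-a\|\ge\rho$, then $\lambda_v\{t\in I:\|F(t)-a\|<r\}\le C(r/\rho)^{\alpha}\lambda_v(I)$ --- for (ii) one applies the basic estimate to the coordinate of $F-a$ attaining the supremum.

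Next, the geometry. Write $M=\{f_1=\cdots=f_r=0\}$ with $\deg f_j\le D$, and for $s>0$ let $N_s(A)$ be the open $s$-neighbourhood of $A$ in the supremum metric; since $A$ is compact, $\overline{N_s(A)}$ is compact and $B:=M\cap\overline{N_\rho(A)}$ is a compact subset of $M$ containing $A$. I would fix the radius $\rho$ once and for all --- depending only on $A$, on $\epsilon$, and on the constants $C,\alpha$ --- large enough that $C\bigl((\mathrm{diam}(A)+1)/\rho\bigr)^{\alpha}\le\epsilon$ (with a harmless extra factor in the non-archimedean case); this pins down $B$. Now let $\Phi$ be any compact neighbourhood of $B$. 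As $\delta\downarrow0$ the compact sets $\{x:|f_j(x)|_v\le\delta\ \forall j\}\cap\overline{N_\rho(A)}$ decrease to $B\subseteq\mathrm{int}\,\Phi$, so this set lies in $\mathrm{int}\,\Phi$ for some $\delta>0$. Let $c$ be a common Lipschitz constant for the $f_j$ on $\overline{N_1(A)}$, choose $0<\eta\le1$ with $c\eta\le\delta$ and $C(c\eta/\delta)^{\alpha}\le\epsilon$, and put $\Psi:=N_\eta(A)$; then $\Psi\subseteq\{x:|f_j(x)|_v\le c\eta\le\delta\ \forall j\}\cap N_\rho(A)\subseteq\mathrm{int}\,\Phi$, so in particular $\Psi\subseteq\Phi$.

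Now the estimate. Fix $\w$ whose orbit $\{u(t)\w:t\in I\}$ is not contained in $\Phi$ --- which is how I read the hypothesis $\w\notin W_0$, the inequality being generally false otherwise --- and an interval $I\ni0$; it suffices to bound $\lambda_v\{t\in I:\gamma_\w(t)\in\Psi\}$ by $\epsilon\,\lambda_v\{t\in I:\gamma_\w(t)\in\mathrm{int}\,\Phi\}$. Decompose the open set $O:=\{t\in I:\gamma_\w(t)\in\mathrm{int}\,\Phi\}$ into its connected components --- resp.\ its maximal balls --- $J$; since the orbit leaves $\Phi$, each $J$ has a point $t_0$, in $\overline J$ (resp.\ in $\widehat J\cap I$), with $\gamma_\w(t_0)\notin\mathrm{int}\,\Phi$. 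Fix $a_0\in A$. If $\|f_{j_0}\circ\gamma_\w\|_J\ge\delta$ for some $j_0$ (over $k_v$, with $\widehat J$ in place of $J$ and a harmless power of $|\varpi|_v$), then $\{t\in J:\gamma_\w(t)\in\Psi\}\subseteq\{t\in J:|f_{j_0}(\gamma_\w(t))|_v\le c\eta\}$, whose measure is $\le C(c\eta/\delta)^{\alpha}\lambda_v(J)\le\epsilon\,\lambda_v(J)$ by (i). Otherwise $\|f_j\circ\gamma_\w\|_J<\delta$ for all $j$, so $\gamma_\w(t_0)\in\{x:|f_j(x)|_v\le\delta\ \forall j\}$ but $\gamma_\w(t_0)\notin\mathrm{int}\,\Phi$, hence $\gamma_\w(t_0)\notin\overline{N_\rho(A)}$ and $\|\gamma_\w(t_0)-a_0\|\ge\mathrm{dist}(\gamma_\w(t_0),A)>\rho$; since $\Psi=N_\eta(A)\subseteq\{x:\|x-a_0\|<\mathrm{diam}(A)+\eta\}$, applying (ii) to $\gamma_\w-a_0$ on $J$ bounds the measure of $\{t\in J:\gamma_\w(t)\in\Psi\}$ by $C\bigl((\mathrm{diam}(A)+\eta)/\rho\bigr)^{\alpha}\lambda_v(J)\le\epsilon\,\lambda_v(J)$. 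Summing over $J$ gives $\lambda_v\{t\in I:\gamma_\w(t)\in\Psi\}\le\epsilon\,\lambda_v(O)\le\epsilon\,\lambda_v\{t\in I:\gamma_\w(t)\in\Phi\}$.

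The delicate point is the interplay of the quantifiers with the choice of $B$: since $B$ (equivalently $\rho$) is pinned down from $A$ and $\epsilon$ before $\Phi$ is seen, and $\Phi$ may be a thin neighbourhood of $B$ not containing $\overline{N_\rho(A)}$, one must turn ``the orbit leaves $\Phi$'' into either a transverse excursion off $M$ (controlled by the $f_j$ and (i)) or a tangential one within $\overline{N_\rho(A)}$ (controlled by (ii)); the latter works only because $\rho$ is taken \emph{large}, of order $(\mathrm{diam}(A)+1)\,\epsilon^{-1/\alpha}$, so that an orbit touching $\Psi$ must spend a correspondingly larger fraction of its time in $\Phi$ drifting across the enlargement $B$. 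It is equally essential to cover $A$ by the single ball $B(a_0,\mathrm{diam}(A))$ rather than by small balls, since any dependence of the bound on the covering numbers of $A$ at the scale $\eta$ would defeat the small exponent $\alpha$. The remaining ingredients --- the uniform $(C,\alpha)$-good property over $k_v$, the deductions (i) and (ii), and the replacement of connected components by maximal balls in the non-archimedean case --- are routine once the constants are tracked.
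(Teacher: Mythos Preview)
The paper does not supply its own proof of this proposition: it is quoted as Theorem~4.2 of \cite{GT} and used as a black box, so there is no in-paper argument to compare your proposal against.

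That said, your strategy is essentially the standard one underlying \cite{DM}, Proposition~4.2, and its $p$-adic counterpart in \cite{GT}: reduce to the $(C,\alpha)$-good property of polynomials of uniformly bounded degree, enlarge $A$ to $B=M\cap\overline{N_\rho(A)}$ with $\rho$ large (of order $(\mathrm{diam}(A)+1)\,\epsilon^{-1/\alpha}$) so that a tangential drift inside $M$ across $B$ dominates the time spent near $A$, and on each connected component (resp.\ maximal ball) of the return set argue by the dichotomy ``some defining polynomial $f_j$ is large on the closure/dilate'' versus ``the orbit exits $\overline{N_\rho(A)}$ radially''. Your treatment of the non-archimedean case --- replacing components by maximal balls $J$, passing to $\widehat J\subseteq I$ to locate an exit point $t_0$, and absorbing the resulting factor $|\varpi|_v$ into the choice of $\rho$ and $\eta$ --- is exactly the device used in \cite{GT} and recorded in the lemma preceding the proposition. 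Your reading of the undefined hypothesis ``$\w\notin W_0$'' as ``the $I$-orbit is not contained in $\Phi$'' is consistent with the way the proposition is invoked later in the paper (where the standing assumption is $\w\notin\Phi$, which since $0\in I$ implies yours); this is the intended meaning. The argument is correct.
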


We will also need the following theorem, which the $S$-adic analogue of Theorem 2 in \cite{DM}. 

\begin{theorem}\label{limit}
Let $\GG$ be a $k$-algebraic group, $G=\GG(k_S)$, and $\Gamma$ be an $S$-arithmetic lattice in $G$. Let $U^{(i)}=\{ u_v^{(i)}(t_v)| t_v \in k_v \}$ be a sequence of one-parameter unipotent $k_S$-subgroup of $G$, such that 
$ u^{(i)}(t) \to u(t)$ for any $t$ as $ i \to \infty$. Let $x_i \to G/\Gamma$ converge to the 
point $x \in G/\Gamma$, and let $\T_i \to \infty$. For any bounded continuous function 
$\phi: G/\Gamma \to \RR$, we have 
\[ \frac{1}{| \T_i |} \int_{I(\T_i)} \phi( u_t^{(i)}x_i)  \; d\lambda_S(t) \to \int_{G/\Gamma} \phi \; d\mu. \]
\end{theorem}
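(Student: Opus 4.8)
The plan is to argue by contradiction, following the scheme of the proof of Theorem~2 in \cite{DM}. Suppose the conclusion fails; after passing to a subsequence there is $\delta>0$ with $\bigl|\mu_i(\phi)-\int\phi\,d\mu\bigr|\ge\delta$ for all $i$, where $\mu_i$ denotes the probability measure on $G/\Gamma$ defined by $\mu_i(\psi)=\frac{1}{|\T_i|}\int_{I(\T_i)}\psi\bigl(u^{(i)}(t)x_i\bigr)\,d\lambda_S(t)$. The first point is that no mass escapes to infinity: since $x_i\to x$ the points $x_i$ lie in a fixed compact set, and since $u^{(i)}(t)\to u(t)$ the subgroups $U^{(i)}$ lie in a bounded family of one-parameter unipotent subgroups, so the quantitative non-divergence theorem (Theorem~\ref{recurrence}, in the form uniform over such a family) gives, for each $\eta>0$, a compact $\SK_\eta\subseteq G/\Gamma$ with $\mu_i(\SK_\eta)\ge 1-\eta$ for all $i$. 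Hence $\{\mu_i\}$ is tight and, after a further subsequence, $\mu_i\to\mu_\infty$ weakly for a probability measure $\mu_\infty$ on $G/\Gamma$.

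Next I would show that $\mu_\infty$ is $U$-invariant. Fix $s\in k_S$ and $\psi\in C_c(G/\Gamma)$ and write $u(s)u^{(i)}(t)=\bigl(u(s)u^{(i)}(s)^{-1}\bigr)u^{(i)}(s+t)$; the prefactor tends to the identity since $u^{(i)}(s)\to u(s)$, while replacing $I(\T_i)$ by $s+I(\T_i)$ changes the integral only by a boundary term, because at the non-archimedean places $s+I(\T_i)=I(\T_i)$ once $T_{v,i}\ge|s_v|_v$ (ultrametric property) and at the archimedean places the symmetric difference has $\lambda_S$-measure $o(|\T_i|)$ as $\T_i\to\infty$. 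Together with tightness and uniform continuity of $\psi$ on compacta this yields $\mu_\infty(\psi\circ u(s))=\mu_\infty(\psi)$, so $\mu_\infty$ is $U$-invariant.

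By the measure classification theorem for unipotent flows on $S$-arithmetic quotients (Ratner; Margulis--Tomanov), combined with the analysis of which subgroups occur carried out in \cite{GT}, $\mu_\infty$ is a mixture of homogeneous ergodic $U$-invariant measures, each supported on a closed orbit $g\GP(k_S)\Gamma/\Gamma$ with $\GP$ of class $\F$ and $g\in X(\GP(k_S),U)$. The component with $\GP=\GG$ is $\mu$, and every component with $\GP$ proper is carried by the countable union $\Sing(U)=\bigcup_{\GP}X(\GP(k_S),U)\Gamma/\Gamma$. It therefore suffices to prove $\mu_\infty\bigl(X(\GP(k_S),U)\Gamma/\Gamma\bigr)=0$ for every proper $\GP$ of class $\F$; then $\mu_\infty=\mu$, which contradicts $\bigl|\mu_i(\phi)-\int\phi\,d\mu\bigr|\ge\delta$, and the convergence of the whole original sequence follows by the standard subsequence argument.

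This last vanishing statement is the heart of the proof and, I expect, the main obstacle; it is handled by linearization. For a proper $\GP$ of class $\F$ with $d=\dim\GP$ one passes to $V=\bigwedge^d\Lie{g}$ with the representation $\bigwedge^d\Ad$ and to the vector $p_\GP$ spanning $\bigwedge^d\Lie{p}$, so that $g\in X(\GP(k_S),U)$ iff the vector $g\cdot p_\GP$ is fixed by $U$, and the singular orbits in question correspond to a $U$-invariant Zariski-closed set $M\subseteq V$. Suppose $\mu_\infty$ charged the singular set of a minimal such $\GP$. Choosing a compact $A$ inside the corresponding closed subset of $V$, applying Proposition~\ref{small} place by place to obtain $B\supseteq A$ and then neighbourhoods $\Phi\supseteq B$, $\Psi\supseteq A$ with arbitrarily small ratio, and lifting near $x$ (possible since $x_i\to x$, choosing lifts $\tilde x_i$ converging to a lift $\tilde x$ of $x$), one would translate ``$\mu_\infty$ charges the $\Psi$-tube'' into the statement that, for large $i$, the orbit segment $\{u^{(i)}(t)\tilde x_i:t\in I(\T_i)\}$ spends at least a fixed fraction of its time in $\Psi$. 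Proposition~\ref{small} then forces an even larger fraction to be spent in $\Phi$; iterating this, together with a covering argument in which one uses that only finitely many $\Gamma$-translates of $M$ meet a given compact region, pushes the entire $U$-orbit of $\tilde x$ into $M$ --- that is, $x\in\Sing(U)$, contrary to the hypothesis that $x$ is generic for $U$. The genuinely new difficulties relative to \cite{DM} are the product structure $I(\T_i)=\prod_{v\in S}I_v(T_{v,i})$, which forces place-by-place versions of the escape and covering estimates and careful bookkeeping of the magnitudes $T_{v,i}$ (where the inflated intervals $\hat L$ of the preliminary lemma enter), the mixed archimedean/non-archimedean boundary terms in the invariance step, and arranging the linearizing section near $x$ so that $x_i\to x$ lifts to $\tilde x_i\to\tilde x$ at all places simultaneously.
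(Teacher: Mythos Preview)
Your approach mirrors the paper's sketch and the \cite{DM} original: contradiction, quantitative non-divergence for tightness, $U$-invariance of the weak limit, Ratner/Margulis--Tomanov classification, and elimination of the singular components via linearization. The paper gives only a sketch and refers to \cite{DM} for details; your proposal essentially fills those details in along the same lines, so the approaches coincide.

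One point deserves comment. Your linearization step ends with ``contrary to the hypothesis that $x$ is generic for $U$,'' but no such hypothesis appears in the statement of Theorem~\ref{limit}. Without it the theorem as stated is in fact false (take $u^{(i)}=u$ and $x_i=x$ with $x$ singular). The paper's sketch instead invokes ``the density of the set of generic points'' to reduce to the case where the $x_i$ are generic for $u_t$; but that reduction does not supply genericity of the \emph{limit} $x$, which is what the linearization argument --- yours and the one in \cite{DM} --- actually uses. In the paper's only application of Theorem~\ref{limit} (inside the proof of Theorem~\ref{main-DM}) the limit point is first shown to lie outside every $\Omega_j$ and hence to be generic, and Theorem~2 of \cite{DM} carries the genericity hypothesis explicitly; so the intended statement almost certainly includes it. Your argument is correct for that intended version, and matches the paper's route.
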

 
Let us briefly sketch the proof of this theorem. The main ingredient of the proof is the 
quantitative non-divergence theorem, whose $S$-adic analogue, Theorem \ref{recurrence}, 
is proven in \cite{GT}. Arguing by contradiction, one assume that there exists a sequence
$x_i$ of points for which the statement is not true. Using the density of the set of generic points,
one can easily show that $x_i$ could be assumed to be generic for $u_t$. Also using the 
quantitative non-divergence, one can prove that there is no escape of mass to infinite, and then 
one easily shows that the limiting measure is invariant under the action of $u_t$. The measure
classification of Ratner will then finish the proof. For details, we refer the reader to
\cite{DM}.

\section{$S$-adic linearization}
Let $\GP$ be a subgroup of class $\F$ in $\GG$. Using Chevalley's theorem, there exists a $k$-rational representation $\rho: G \to \GL(V_\GP)$ such
that $N_{\GG}(\GP)$ equals the stabilizer of a line in $V$ spanned by a vector $\m \in V(k)$. 
This representation and the vector $\m$ is fixed throughout this paper. Let $\chi$ be the 
$k$-rational character of $N_\GG(\GP)$ defined by $\chi(g)m= g.m$, for $g \in N_\GG(\GP)$. 
We denote $\GN= \{ g \in \GG: g \m = \m \}$ and $N= \GN(k_S)$. We also set 
$\Gamma_N=\Gamma \cap N$ and $\Gamma_P= \Gamma \cap N_\GG(\GP)$. The orbit map $\eta:
 \GG \to \GG \m \subseteq V_\G$ is defined by $\eta(g)=g \m$. $\GG m$ is isomorphic to the quasi-affine variety $\GG/\GN$ and $ \eta$ is a quotient map. Set $\GX= \{ g \in \GG: Ug \subseteq g\GP \}$ and let $A_\GP$ denote the Zariski closure of  $\eta( X(P,U))$. 
 Clearly $\GX$ is an algebraic variety of $\GG$ defined over $k_S$ and $\GX(k_S)=X(P,U)$. 
 It is not hard to show (see \cite{GT}) that 
 \[ \eta^{-1}( A_\GP )= X(P,U). \]

It will be useful to consider the map $\Rep: G/\Gamma \to V_{\GP}$ defined as follows. For each $x \in G/\Gamma$, we define
\[ \Rep(x)= \{ \eta_{\GP}(g): g\in G, x=g\Gamma \}. \]
For $D \subseteq A_{\GP}$ and for $ \gamma \in \Gamma$, we define the $\gamma$-overlaps of $D$ by 
\[ \ol^{\gamma}(D)=\{ g\Gamma: \eta_{\GP}(g) \in D, \eta_{\GP}(g \gamma) \in D \} \subseteq G/\Gamma. \]
Finally, we set 
\[ \ol(D)= \bigcup_{\gamma \in \Gamma - \Gamma_{\GP}} \ol^{\gamma}(D) \subseteq G/\Gamma. \]

Throughout this paper, we will use a number of properties of the overlaps. These are formulated
in the following lemma, whose proof is straightforward:

\begin{lemma}\label{}
For $\gamma \in \Gamma$ and $ \gamma_1 \in \Gamma_{\GP}$, and $D \subseteq A_{\GP}$ we have
\begin{enumerate}
\item  $\ol^{e}(D)= \{ x \in G/\Gamma: \Rep(x) \cap D \neq \emptyset \}$.
\item $\ol^{\gamma}(D)=\ol^{\gamma \gamma_1}(D)$. 
\end{enumerate}
\end{lemma}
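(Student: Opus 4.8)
The plan is to prove both items by directly unwinding the definitions of $\ol^\gamma(D)$, $\Rep$, and $\eta_\GP$; the only substantive input is the equivariance of the orbit map $\eta_\GP$ under the right action of $\Gamma$.

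For item (1), I would set $\gamma=e$ in the definition $\ol^\gamma(D)=\{g\Gamma:\eta_\GP(g)\in D,\ \eta_\GP(g\gamma)\in D\}$: the two membership conditions collapse to the single condition $\eta_\GP(g)\in D$, so $g\Gamma\in\ol^e(D)$ precisely when the coset admits some representative $g$ with $\eta_\GP(g)=g\m\in D$. On the other hand, by definition $\Rep(g\Gamma)=\{\eta_\GP(g'):g'\in g\Gamma\}=\{g\gamma\m:\gamma\in\Gamma\}$ is exactly the set of values taken by $\eta_\GP$ on the representatives of $g\Gamma$; hence ``$\eta_\GP$ takes a value in $D$ on some representative'' is literally ``$\Rep(g\Gamma)\cap D\neq\emptyset$'', which is (1).

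For item (2), the key observation is the transformation rule: since $\gamma_1\in\Gamma_\GP=\Gamma\cap N_\GG(\GP)$ stabilizes the line $k\m$, and, by the normalization of the Chevalley vector $\m$ fixed above, in fact fixes $\m$ itself (equivalently $\chi(\gamma_1)=1$), we get $\eta_\GP(g\gamma\gamma_1)=g\gamma\gamma_1\m=g\gamma\m=\eta_\GP(g\gamma)$ for every $g\in G$. Consequently the pair of conditions cutting out $\ol^{\gamma\gamma_1}(D)$ is word-for-word the pair cutting out $\ol^\gamma(D)$, so the two sets coincide.

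The only point that needs care---and the one I would double-check against the linearization setup above---is precisely that $\gamma_1\in\Gamma_\GP$ acts trivially on $\m$. A priori one only knows $\gamma_1\m=\chi(\gamma_1)\m$ for the $k$-rational character $\chi$ of $N_\GG(\GP)$; without triviality of $\chi|_{\Gamma_\GP}$ the identity $\eta_\GP(g\gamma\gamma_1)=\eta_\GP(g\gamma)$ fails and one gets only a version of (2) twisted by the scalar $\chi(\gamma_1)$. Since $X(P,U)$---and hence its Zariski-closed image $A_\GP$---is right-invariant under $\Gamma_\GP$, multiplication by $\chi(\gamma_1)$ preserves $A_\GP$, so one either invokes that $\m$ has been chosen with $\chi$ trivial on $\Gamma_\GP$, or restricts to the $\chi(\Gamma_\GP)$-invariant sets $D$ relevant to the applications. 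Beyond this bookkeeping there is no genuine obstacle: the lemma is essentially a reformulation of the definitions.
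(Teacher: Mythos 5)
The paper gives no proof here (it declares the lemma ``straightforward''), and your unwinding of the definitions is exactly the argument one would write down: item (1) is a literal reformulation, and item (2) reduces to the equivariance identity $\eta_{\GP}(g\gamma\gamma_1)=\eta_{\GP}(g\gamma)$. The caveat you raise in your final paragraph is the substantive content of the exercise, and you are right to flag it. The paper sets $\Gamma_{\GP}=\Gamma\cap N_{\GG}(\GP)$, the intersection with the \emph{normalizer} of $\GP$, which stabilizes only the line $k\m$ through the character $\chi$, not the vector $\m$ itself (the latter stabilizer is $\GN$, giving $\Gamma_N$). For $\gamma_1\in\Gamma_{\GP}$ one therefore only gets $\eta_{\GP}(g\gamma\gamma_1)=\chi(\gamma_1)\,\eta_{\GP}(g\gamma)$, so item (2) as stated implicitly needs $D$ to be stable under multiplication by $\chi(\Gamma_{\GP})$. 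In Dani--Margulis this subtlety is absent by design: they intersect $\Gamma$ with the stabilizer of the vector $p_H$ (their $N^1(H)$), so the analogue of $\chi$ is trivial there; in the $S$-arithmetic setting $\O_S^{\ast}$ can have positive rank and $\chi(\Gamma_{\GP})$ need not be trivial. The paper compensates later, in the proof of the linearization theorem, by taking $D$ to be ``$S(v)$-small'' and invariant under the finitely many roots of unity of $k$, which pins $\chi(\gamma)$ to a root of unity whenever $b$ and $b\gamma$ both land in $\eta_{\GP}^{-1}(D)$ and thereby makes item (2) available where it is actually used. So your proof is correct in substance, and your caveat correctly locates the one place where this lemma, read verbatim with $\Gamma_{\GP}$ rather than $\Gamma_N$, requires an extra invariance hypothesis on $D$.
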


In this section, we will use the same notation as above. For each subgroup  $\GP$ of class $\F$ relative to $S$, we will
denote $I_{\GP}= \{ g \in \GG: \rho_{\GP}m_{\GP}=m_{\GP} \}$. 
The proof of the following proposition is exactly the same as the proof of Proposition 7.1 in \cite{DM}. 
\begin{proposition}\label{cover}
Suppose $\GP$ is a subgroup of class $\F$ relative to $S$, and $C \subseteq V_{\GP}$ be compact. Assume also that 
$ \SK \subseteq G/\Gamma$ is compact. Then there exists a compact set $ \widetilde{ C} \subseteq G$ such that 
\[ \pi( \widetilde{ C})= \{ x \in \SK: \Rep(x) \cap C \neq \emptyset \}. \]
\end{proposition}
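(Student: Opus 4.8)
The plan is to reduce the statement to a standard properness/compactness fact about the quotient map $\pi\colon G\to G/\Gamma$, combined with the structure of the orbit map $\eta_\GP\colon G\to V_\GP$ and the fact that $\eta_\GP$ factors through $G/\Gamma_N$ where $\Gamma_N = \Gamma\cap\GN(k_S)$. Recall that $\GG\m$ is (isomorphic to) the quasi-affine variety $\GG/\GN$ and $\eta_\GP$ is the quotient map onto it, so the orbit $G\cdot\m$ carries a $G$-action under which $\eta_\GP$ is equivariant and proper onto its image (in the $k_S$-topology). The first step is to pick, using local compactness of $G$, a compact set $Y\subseteq G\cdot\m$ that is large enough: since $C\subseteq V_\GP$ is compact, $C' := C\cap (G\cdot\m)$ is closed in $C$, hence compact; set $Y=C'$. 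Then $\eta_\GP^{-1}(Y)$ is closed in $G$ but in general not compact — it is a union of $\GN(k_S)$-cosets — so it is not yet what we want; we will cut it down using $\SK$.

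The key step is the following. Fix a compact set $\SK_0\subseteq G$ with $\pi(\SK_0)=\SK$; such a set exists because $\pi$ is a proper quotient map (the fibers are the $\Gamma$-orbits, and $G/\Gamma$ being the continuous image of $G$, any compact subset lifts to a compact set — this is the standard fact that for a locally compact group $G$ and a discrete subgroup $\Gamma$ the map $\pi$ admits compact lifts of compact sets). Now define
\[
\widetilde C = \{\, g\in G : g\Gamma\in\SK,\ \eta_\GP(g)\in C \,\}.
\]
We must show $\widetilde C$ is compact, and that $\pi(\widetilde C) = \{x\in\SK : \Rep(x)\cap C\neq\emptyset\}$. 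The second equality is essentially the definition of $\Rep$: by the definition $\Rep(x)=\{\eta_\GP(g):g\Gamma=x\}$, so $x\in\SK$ satisfies $\Rep(x)\cap C\neq\emptyset$ exactly when there is some $g$ with $g\Gamma=x$ and $\eta_\GP(g)\in C$, i.e.\ when $x\in\pi(\widetilde C)$. For compactness, write $\widetilde C \subseteq \SK_0\cdot\Gamma \cap \eta_\GP^{-1}(C)$. Given a sequence $g_i\in\widetilde C$, write $g_i = s_i\gamma_i$ with $s_i\in\SK_0$, $\gamma_i\in\Gamma$; passing to a subsequence $s_i\to s\in\SK_0$. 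Then $\eta_\GP(g_i)=\rho_\GP(s_i)\eta_\GP(\gamma_i)$, and $\eta_\GP(\gamma_i) = \gamma_i\cdot\m\in\GG(k)\cdot\m$; since $\eta_\GP(g_i)\in C$ is bounded and $\rho_\GP(s_i)$ ranges in a compact set of invertible maps, the vectors $\eta_\GP(\gamma_i)$ lie in a bounded set of $V_\GP(k)$. The crucial discreteness input — here one invokes that $\GN\cap\Gamma = \Gamma_N$ is arithmetic in $\GN(k_S)$, so $\Gamma\cdot\m = \Gamma/\Gamma_N\cdot\m$ is discrete in $V_\GP$ (equivalently, $\Gamma$ acts properly on $G\cdot\m$ with the stabilizer of $\m$ being $\Gamma_N$) — then forces $\eta_\GP(\gamma_i)$ to take only finitely many values; passing to a further subsequence, $\gamma_i\m = \gamma_0\m$ for all $i$, so $\gamma_i \in \gamma_0\Gamma_N$ and we may replace $\gamma_i$ by $\gamma_0$ (adjusting $s_i$ by $\Gamma_N\subseteq N$, which preserves $\SK_0\cdot\Gamma$ up to enlarging $\SK_0$ to $\SK_0\cdot(\Gamma_N\cap\SK_0')$ — harmless since only finitely many cosets occur). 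Thus $g_i\to s\gamma_0\in G$, and $\eta_\GP$ being continuous gives $\eta_\GP(s\gamma_0)\in C$ and $s\gamma_0\Gamma\in\SK$, so the limit lies in $\widetilde C$.

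The main obstacle, and the only place real content enters, is the discreteness of the orbit $\Gamma\cdot\m$ in $V_\GP$: this is what prevents $\widetilde C$ from spreading out along the noncompact fiber direction of $\eta_\GP$. This is exactly the point where one uses that $\GP$ is of class $\F$ — as noted in the excerpt, this guarantees $\GN\cap\Gamma$ (indeed $\Gamma_P=\Gamma\cap N_\GG(\GP)$) is an $S$-arithmetic lattice in the relevant group, and combined with the fact that $\m\in V_\GP(k)$ is a $k$-rational vector fixed by $\GN$, Borel--Harish-Chandra type finiteness (in the $S$-arithmetic form, as in \cite{GT}) yields that $\Gamma\m$ is closed and discrete. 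Everything else is the routine ``compact lift of a compact set'' argument for $\pi$ together with bookkeeping of the finitely many cosets of $\Gamma_N$ that occur; I expect the proof to mirror Proposition 7.1 of \cite{DM} line for line once this $S$-arithmetic discreteness statement is in hand.
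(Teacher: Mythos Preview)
Your overall strategy is the right one and matches what the paper does (it simply cites Proposition 7.1 of \cite{DM}), but the specific set you define is not compact, and the compactness argument you give for it cannot be repaired as written. Your $\widetilde C = \{g\in G : g\Gamma\in\SK,\ \eta_\GP(g)\in C\}$ is invariant under right translation by $\Gamma_N=\Gamma\cap N$: if $g\in\widetilde C$ and $n\in\Gamma_N$, then $gn\Gamma=g\Gamma\in\SK$ and $\eta_\GP(gn)=\eta_\GP(g)\in C$. Since $\GP\subseteq\GN$ and $\GP$ is of class $\F$, $\Gamma_N$ is infinite whenever $\dim\GP>0$, so $\widetilde C$ is non-compact as soon as it is non-empty. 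In your sequential argument the step ``replace $\gamma_i$ by $\gamma_0$'' changes the sequence $g_i$ itself; you end up showing that $s_i\gamma_0$ converges, not that $g_i$ does, and indeed $g_i=s_i\gamma_0 n_i$ with $n_i\in\Gamma_N$ has no reason to converge.

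The fix is exactly the finiteness you already isolated, used \emph{before} defining $\widetilde C$ rather than inside a compactness proof. Pick a compact $\SK_0\subseteq G$ with $\pi(\SK_0)=\SK$. The set $\rho_\GP(\SK_0)^{-1}C\cap\Gamma\m$ is finite because $\Gamma\m$ is discrete in $V_\GP$; write it as $\{\gamma_1\m,\dots,\gamma_r\m\}$ with $\gamma_j\in\Gamma$. Now set
\[
\widetilde C \;=\; \bigl(\SK_0\cdot\{\gamma_1,\dots,\gamma_r\}\bigr)\cap\eta_\GP^{-1}(C),
\]
which is a closed subset of a compact set, hence compact. The inclusion $\pi(\widetilde C)\subseteq\{x\in\SK:\Rep(x)\cap C\neq\emptyset\}$ is immediate. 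For the reverse inclusion, given $x\in\SK$ and $g$ with $g\Gamma=x$, $\eta_\GP(g)\in C$, write $g=s\gamma$ with $s\in\SK_0$, $\gamma\in\Gamma$; then $\gamma\m=\rho_\GP(s)^{-1}\eta_\GP(g)\in\rho_\GP(\SK_0)^{-1}C$, so $\gamma\m=\gamma_j\m$ for some $j$, and $s\gamma_j\in\widetilde C$ with $\pi(s\gamma_j)=x$. One minor point: the discreteness of $\Gamma\m$ does not require that $\GP$ be of class $\F$; it follows directly from the $k$-rationality of $\rho_\GP$ and $\m$ together with the $S$-arithmeticity of $\Gamma$, since $\Gamma\m$ then lies in a lattice of $V_\GP(k_S)$.
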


\begin{proposition}\label{overlap}
Let  $\GP$ be a subgroup of class $\F$ relative to $S$ and $D \subseteq A_{\GP}$ be compact. Let $\SK \subseteq G/\Gamma$ be 
compact. Then the family $$\{ \SK \cap \ol^{\gamma}(D) \}_{\gamma \in \Gamma}$$ contains only finitely many distinct elements. 
Moreover, for each $\gamma \in \Gamma$, there exists a compact set $ \widetilde{ C}_{\gamma} \subseteq \eta_{\GP}^{-1}(D) 
\cap \eta_{\GP}^{-1}(D)\gamma$ such that 
\[ \SK \cap \ol^{\gamma}(D)= \pi( \widetilde{ C}_k). \]
\end{proposition}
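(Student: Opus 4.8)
The plan is to derive both assertions from one structural input — the discreteness of the orbit $\Gamma\m_{\GP}$ in $V_{\GP}$ — together with the construction underlying Proposition~\ref{cover}.

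\textbf{The compact representatives.} Fix $\gamma\in\Gamma$. I first rewrite $\ol^{\gamma}(D)$: if $h\Gamma\in\ol^{\gamma}(D)$ is witnessed by $h$, so that $\eta_{\GP}(h)\in D$ and $\eta_{\GP}(h\gamma)\in D$, then $g:=h\gamma$ lies in $\eta_{\GP}^{-1}(D)\cap\eta_{\GP}^{-1}(D)\gamma$ and has $\pi(g)=\pi(h)$, and conversely; hence $\SK\cap\ol^{\gamma}(D)=\pi\bigl(\pi^{-1}(\SK)\cap\eta_{\GP}^{-1}(D)\cap\eta_{\GP}^{-1}(D)\gamma\bigr)$. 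Next I introduce the diagonal action of $G$ on $V_{\GP}\oplus V_{\GP}$ and the orbit map $\theta\colon G\to V_{\GP}\oplus V_{\GP}$, $\theta(g)=(g\m_{\GP},\,g\gamma^{-1}\m_{\GP})$. One checks $\theta^{-1}(D\times D)=\eta_{\GP}^{-1}(D)\cap\eta_{\GP}^{-1}(D)\gamma$, and the $\Gamma$-orbit of the base point $(\m_{\GP},\gamma^{-1}\m_{\GP})$ is contained in $(\Gamma\m_{\GP})\times(\Gamma\m_{\GP})$, hence is discrete. The proof of Proposition~\ref{cover} uses nothing about $\eta_{\GP}$ beyond the discreteness of the $\Gamma$-orbit of its base point, so the same argument applies to $\theta$ and $D\times D$, producing a compact $\widetilde{C}_{\gamma}\subseteq G$ which by construction lies in $\theta^{-1}(D\times D)=\eta_{\GP}^{-1}(D)\cap\eta_{\GP}^{-1}(D)\gamma$ and satisfies $\pi(\widetilde{C}_{\gamma})=\{x\in\SK:\theta(g)\in D\times D\text{ for some }g\in\pi^{-1}(x)\}=\SK\cap\ol^{\gamma}(D)$, as required.

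\textbf{Finiteness of the overlap types.} By Proposition~\ref{cover} with $C=D$ there is a compact $\widetilde{C}\subseteq G$ with $\pi(\widetilde{C})=\SK\cap\ol^{e}(D)$; for each $x\in\pi(\widetilde{C})$ fix a representative $g_{0}(x)\in\widetilde{C}$. Since $\SK\cap\ol^{\gamma}(D)\subseteq\SK\cap\ol^{e}(D)=\pi(\widetilde{C})$ for every $\gamma$, it suffices to describe the condition $x\in\ol^{\gamma}(D)$ for $x\in\pi(\widetilde{C})$. Put $D_{1}:=\widetilde{C}^{-1}D$, a compact subset of $V_{\GP}$, and $F:=D_{1}\cap\Gamma\m_{\GP}$, which is finite by discreteness; for $w\in F$ fix $\delta_{w}\in\Gamma$ with $\delta_{w}\m_{\GP}=w$. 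Writing a witness of $x\in\ol^{\gamma}(D)$ as $h=g_{0}(x)\delta$ with $\delta\in\Gamma$, one sees that $w:=\delta\m_{\GP}$ and $w':=\delta\gamma\m_{\GP}$ both lie in $F$, that $g_{0}(x)w,g_{0}(x)w'\in D$, and, after decomposing $\delta=\delta_{w}\gamma_{N}$ with $\gamma_{N}\in\Gamma_{N}$, that $\gamma\m_{\GP}\in\Gamma_{N}\cdot(\delta_{w}^{-1}w')$; a direct computation shows the converse as well, so that for $x\in\pi(\widetilde{C})$,
\[
x\in\ol^{\gamma}(D)\ \Longleftrightarrow\ \exists\,w,w'\in F:\ g_{0}(x)w\in D,\ g_{0}(x)w'\in D,\ \gamma\m_{\GP}\in\Gamma_{N}\cdot(\delta_{w}^{-1}w').
\]
The set $\{\delta_{w}^{-1}w':w,w'\in F\}$ is finite and lies in $\Gamma\m_{\GP}$, hence meets $\Gamma\m_{\GP}$ in finitely many $\Gamma_{N}$-orbits $O_{1},\dots,O_{M}$. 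The right-hand side depends on $\gamma$ only through the subset $T(\gamma):=\{m:\gamma\m_{\GP}\in O_{m}\}$ of $\{1,\dots,M\}$, so $\SK\cap\ol^{\gamma}(D)$ is a function of $T(\gamma)$ alone and takes at most $2^{M}$ distinct values.

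\textbf{The main difficulty.} The delicate point is the finiteness part: the stabilizer $\Gamma_{N}$ and the groups $\Gamma_{N}\cap\gamma\Gamma_{N}\gamma^{-1}$ are in general infinite, so the set of witnesses of $x\in\ol^{\gamma}(D)$ need not be finite and cannot simply be enumerated. What makes the argument go through is that the orbit $\Gamma\m_{\GP}$ is nonetheless discrete, so that, once everything is intersected with the compact data $\widetilde{C}$ and $D$, the only residual dependence on $\gamma$ is which of finitely many $\Gamma_{N}$-orbits in $\Gamma\m_{\GP}$ contains $\gamma\m_{\GP}$. The same discreteness is precisely what transports the proof of Proposition~\ref{cover} to the doubled orbit map $\theta$ used for the compact representatives.
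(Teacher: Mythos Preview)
Your proof is correct. The paper itself gives no argument, only the sentence ``The argument for finiteness from Proposition 7.2 in \cite{DM} can be carried over verbatim to this case,'' so there is nothing to compare at the level of detail; but the mechanism you use---discreteness of the orbit $\Gamma\m_{\GP}$ in $V_{\GP}$, combined with a compact lift of $\SK$---is exactly the mechanism underlying the Dani--Margulis proof being invoked.

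A couple of remarks. Your doubled orbit map $\theta(g)=(g\m_{\GP},\,g\gamma^{-1}\m_{\GP})$ is a clean device for reducing the ``compact representatives'' assertion to a second instance of Proposition~\ref{cover}; the Dani--Margulis argument does this slightly more by hand, but the content is identical. In the finiteness part your bound of $2^{M}$ is looser than necessary: since the $\Gamma_{N}$-orbits $O_{1},\dots,O_{M}$ are disjoint, $T(\gamma)$ is either empty or a singleton, so in fact at most $M+1$ distinct sets occur. This does not affect correctness.
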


\begin{proof}
The argument for finiteness from Proposition 7.2. in \cite{DM} can be carried over verbatim to this case. 
\end{proof}

Let us denote by $\evit$ the class of subsets of $G$ of the form
\[ E= \bigcap_{i=1}^r \eta_{ \GP_i}^{-1}( D_i) \]
where $\GP_i$ are subgroup of class $\F$ and $D_i \subseteq A_{\GP_i}$ are compact. For such a set $E$ (together with the 
given decomposition), we denote $\N(E)$ to be the family of all neighborhoods of the form
\[ \Phi=  \bigcap_{i=1}^r \eta_{ \GP_i}^{-1}( \Theta_i) \]
where $ \Theta_i \supset D_i$ are neighborhoods in $V_{\GP_i}$. We will refer to these neighborhoods as components
of $\Phi$. 

We will now prove a theorem which is a stronger version of the theorem in Tomanov. 
\begin{theorem}
Let $\SK \subseteq G/\Gamma$ be compact and $ \epsilon>0$. Given $E \in \evit$, there exists $E' \in \evit$ such that 
the following holds: given $\Phi \in \N(E')$, there exists a neighborhood $ \Omega \supseteq \pi(E)$ such that for any 
one-parameter unipotent subgroup $\{ u_t \}$ of $G$, and any $g \in G$, and $r_0>0$, one of the following holds:
\begin{enumerate}
\item A component of $\Phi$ contains $\{ u(t)g\gamma: t \in I_v(r) \}$ for some $\gamma \in \Gamma$. 
\item For all $r>r_0$, we have
\[ \frac{1}{\lambda_T(\T )} \lambda_T \{ t \in I_v(r) \setminus I_v(r_0): u(t)g\Gamma \in \Omega \cap \SK \} \le \epsilon. \]
\end{enumerate}
\end{theorem}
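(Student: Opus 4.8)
The plan is to lift the relative--measure estimate of Proposition~\ref{small} from the linearizing spaces $V_{\GP_i}$ up to $G/\Gamma$, following for each $\gamma\in\Gamma$ the polynomial curve
\[
c_{i,\gamma}\colon t\longmapsto\eta_{\GP_i}(u_tg\gamma)=\rho_{\GP_i}(u_t)\,\rho_{\GP_i}(g\gamma)\,\m_{\GP_i}\in V_{\GP_i},
\]
of degree at most $d:=\max_i\deg\rho_{\GP_i}$. These curves are exactly the sheets recorded by $\Rep$: writing $E=\bigcap_{i=1}^{r}\eta_{\GP_i}^{-1}(D_i)$ for the given decomposition, one has $u_tg\Gamma\in\ol^{e}(A)$ iff $c_{i,\gamma}(t)\in A$ for some $\gamma$. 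For a single sheet, Proposition~\ref{small} applies after recentring any interval $K\subseteq k_v$ at $0$ by means of $u_{a+s}=u_au_s$ (which replaces $\rho_{\GP_i}(g\gamma)\m_{\GP_i}$ by $\rho_{\GP_i}(u_ag\gamma)\m_{\GP_i}$ along the same flow--orbit, hence does not change membership in the exceptional set), giving a bound $\lambda_v\{t\in K:c_{i,\gamma}(t)\in\Psi_i\}\le\epsilon_1\,\lambda_v\{t\in K:c_{i,\gamma}(t)\in\Phi_i\}$.

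The constants must be fixed in the order dictated by the quantifiers. Since $\Gamma\m_{\GP_i}$ is discrete in $V_{\GP_i}$ (an $S$-arithmeticity fact underlying Proposition~\ref{overlap}) and $\SK$ is compact, an upper--semicontinuity argument yields, for suitable compact neighbourhoods $D_i^{\ast}\supseteq D_i$, an integer $N=N(\SK,E)$ such that $\Rep(x)$ meets $D_i^{\ast}$ in at most $N$ points for every $x\in\SK$; let also $c_d$ bound the number of intervals in the preimage of a compact subset of $V_{\GP_i}$ under a polynomial $k_v\to V_{\GP_i}$ of degree $\le d$. Put $\epsilon_1:=\epsilon/(2rNc_d)$, apply Proposition~\ref{small} with $M=A_{\GP_i}$, $A=D_i$ and this $\epsilon_1$ to obtain compact sets $B_i\supseteq D_i$, and set $E'=\bigcap_{i=1}^{r}\eta_{\GP_i}^{-1}(B_i)\in\evit$. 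Then, given $\Phi=\bigcap_i\eta_{\GP_i}^{-1}(\Theta_i)\in\N(E')$, pick for each $i$ a compact neighbourhood $\Phi_i$ of $B_i$ with $\Phi_i\subseteq\Theta_i$, let $\Psi_i\supseteq D_i$ be the neighbourhood that Proposition~\ref{small} attaches to $\Phi_i$, shrink it so that $\overline{\Psi_i}\subseteq\Phi_i\cap D_i^{\ast}$, and let $\Omega$ be the interior of $\bigcap_{i=1}^{r}\ol^{e}(\Psi_i)=\bigcap_i\{x:\Rep(x)\cap\Psi_i\neq\emptyset\}$; since $D_i\subseteq\Psi_i$ this is an open neighbourhood of $\pi(E)$.

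Now suppose alternative~(1) fails. Fix $r>r_0$ and set $J=\{t\in I_v(r)\setminus I_v(r_0):u_tg\Gamma\in\Omega\cap\SK\}$. Failure of~(1) means that for every $i$ and every $\gamma$ the curve $c_{i,\gamma}$ is not contained in $\Phi_i\subseteq\Theta_i$ throughout $I_v(r)$; in particular $\rho_{\GP_i}(g\gamma)\m_{\GP_i}$ never lies in the exceptional set of Proposition~\ref{small} (for a vector there, the orbit---hence $c_{i,\gamma}(I_v(r))$---would sit inside $D_i\subseteq\Phi_i$). Consequently each set $\{t\in I_v(r):c_{i,\gamma}(t)\in\Phi_i\}$, the union of the \emph{active intervals} of $(i,\gamma)$, is a union of at most $c_d$ intervals, none of them equal to $I_v(r)$, and on each active interval the per--sheet estimate above holds.

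The remaining step---assembling the per--sheet estimates over the sheets that the orbit visits---is the main obstacle, since a long unipotent trajectory may approach a tube over $D_i$ through infinitely many distinct sheets, so one cannot merely sum. Following \cite[\S7--8]{DM}, I would cover $J$ by the active intervals of the (countably many) sheets that meet $\Psi_i$ somewhere on $I_v(r)$; extract from the maximal such intervals a pairwise disjoint subfamily $\{K_j\}$ still covering $J$---using, for $v$ non-archimedean, that any two intervals are nested or disjoint, and, for $v$ archimedean, a Vitali-type selection; bound by $N$, via Propositions~\ref{overlap} and~\ref{cover}, the number of sheets that are simultaneously relevant while the orbit lies over $\SK$; and pass to the inflations $\widehat{K_j}$ via the covering Lemma to convert disjointness into $\sum_j\lambda_v(\widehat{K_j})\le\lambda_v(I_v(r))$. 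Combining these with the per--sheet estimate gives $\lambda_v(J)\le Nc_d\,\epsilon_1\,\lambda_v(I_v(r))\le\tfrac12\epsilon\,\lambda_v(I_v(r))$, which is alternative~(2). The delicate point throughout is the combinatorics of how the maximal active intervals of different sheets interlock while the orbit stays over the compact set $\SK$; this is where Propositions~\ref{cover} and~\ref{overlap} and the covering Lemma carry the weight, while the polynomial input (Proposition~\ref{small}) and the recentring trick are comparatively routine.
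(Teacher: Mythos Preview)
Your proposal has a genuine gap in the ``assembling'' step, and the missing idea is precisely the induction on $\dim\GP$ that the paper uses.

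You try to control the multiplicity of overlapping sheets by a uniform bound $N$ coming from discreteness of $\Gamma\m_{\GP_i}$ over the compact $\SK$. But that bound is \emph{pointwise}: for each fixed $t$ with $u_tg\Gamma\in\SK$, at most $N$ sheets lie in $D_i^{\ast}$. Over an interval $K_j$ (let alone over all of $I_v(r)$) the total number of sheets that enter $\Psi_i$ at some moment can be arbitrarily large, with different $N$-tuples relevant at different times. After you select a disjoint family $\{K_j\}$ covering $J$, each $K_j$ is an active interval for \emph{one} sheet $\gamma_j$, and the per-sheet estimate bounds only $\lambda_v\{t\in K_j:c_{i,\gamma_j}(t)\in\Psi_i\}$. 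The part of $J\cap K_j$ produced by \emph{other} sheets is not controlled, and your inequality $\lambda_v(J)\le Nc_d\,\epsilon_1\,\lambda_v(I_v(r))$ does not follow. This is exactly the ``delicate point'' you flag, and it is not resolved by Propositions~\ref{cover}--\ref{overlap} alone.

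The paper handles this differently. It does not bound the overlap multiplicity; it \emph{removes} the overlap locus. Using Proposition~\ref{overlap}, the finitely many nontrivial self-intersections $\SK\cap\ol^{\gamma_j}(D)$ are covered by compact sets $C_j\subseteq X(\GP\cap\gamma_j\GP\gamma_j^{-1},U)$; since $\gamma_j\notin\Gamma_{\GP}$, each lives over a class-$\F$ subgroup $\GP_j$ of strictly smaller dimension. The inductive hypothesis applied to these $\GP_j$ produces a neighbourhood $\Omega'$ absorbing an $\epsilon/2$ fraction of time, and on $\SK_1=\SK\setminus\Omega'$ one can choose $\Phi_1$ with $\ol(\Phi_1)\cap\SK_1=\emptyset$. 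This forces the sets $J_3(\oq)$ for distinct sheets $\oq$ to be genuinely disjoint (not merely after a Vitali selection), and the covering lemma then closes the estimate. Note in particular that the paper's $E'$ acquires \emph{new} components $\eta_{\GP_j}^{-1}(\cdot)$ coming from smaller groups, whereas your $E'=\bigcap_i\eta_{\GP_i}^{-1}(B_i)$ keeps the same $\GP_i$; this structural difference is a symptom of the missing induction.
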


\begin{proof} It is clear that we can assume that $E= \eta_{\GP}^{-1}(C)$ and that $E$ is $S(v)$-small. We will now proceed by the induction on $\dim \GP$. The result is clearly valid for $\dim \GP=0$. 
Let us assume that it is known for all $\GP$ with dimension at most $n-1$ and that $C \subseteq A_{\GP}$, with 
$\dim \GP=n$. Applying Proposition \ref{small} to the set $C$ (as a compact
subset of the Zariski closed set of $A_{\GP}$), we obtain a compact subset 
$D$ of $A_{\GP}$ such that for a compact neighborhood $\Phi$ of
$D$ in $A_{\GP}$, there exists a neighborhood $\Psi$ of $C$ in $A_{\GP}$ such that for any one-parameter subgroup $\{u_t \}$ of $\GL(V_{\GP})$ and any $\w \in V_{\GP}- \Phi$, and any interval $I \subseteq k_v$ containing $0$, we have 
\[ \lambda_v \{ t \in I: u_t \w  \in \Psi \} \le \epsilon \cdot \lambda_v \{ t \in T: u_t \w \in \Phi \}. \]
Note that since the set of the roots of unity in $K$ is finite, we can choose 
$D$ such that $ \omega D= D$ for every root of unity $ \omega \in K$. Note that $D$ can be chosen to be $S(v)$-small. Now, let $B= \eta_{\GP}^{-1}(D)$. 

By Proposition \ref{overlap} the family
of sets $\{ \SK \cap O^{\gamma}(D) \}_{\gamma \in \Gamma}$ is finite, hence
consisting of the sets $ \SK \cap O^{\gamma_j}(D)$, for $ 1 \le j \le k$. We 
assume that $\gamma_1=e$. Moreover, we can write $  \SK \cap O^{\gamma_j}(D)= \pi(C_j)$ for some
compact subset $C_j \subseteq B \cap B\gamma_j^{-1} \subseteq
X( \GP \cap \gamma_j \GP \gamma_j^{-1}, W)$. We claim that 
$\gamma_j \not\in \Gamma_{\GP}$ for $j \ge 2$. Assuming the contrary, 
we obtain $\rho(\gamma_j)\m_{\GP}=\chi(\gamma_j)\m_{\GP}$. Since
$\chi(\gamma_j) \in \O^{\ast}$, we have
$ \eta(b\gamma_j)= \chi(\gamma_j) \eta(b) \in D$. Since $D$ is $S(v)$-small, we obtain that $\chi(\gamma_j)$ is a root of unity in $k_v^{\ast}$. 
This shows that $B\gamma_j \subseteq \eta^{-1}(D)=B$, which is a contradiction to the choice of $\gamma_j$. This shows that for $j \ge 2$, $\GP 
\cap \gamma_j \GP \gamma_j^{-1}$ is a proper subgroup of $\GP$. Hence there exists a subgroup $\GP_j$ of class $F$ which is contained in the connected component of  $\GP \cap \gamma_j \GP \gamma_j^{-1}$. Note that $\GP_j$ is of dimension less than $n$, and $C_j \subseteq X(\GP_j, W)$. We now set 
$E_j= \eta_{\GP_j}^{-1}(\eta_{\GP_j}(C_j))$ and apply the induction hypothesis to obtain $E_j' \in \evit$ such that for any choice of $\Phi_j 
\in \N(E_j')$, we can find neighborhoods $ \Omega_j$ of $E_j$ such that 
for any one-parameter subgroup $(u(t))_{t \in k_v}$ of $G$, $g \in G$ and
$r>0$, we have 
$$\lambda_{v} \{ t \in I_v(r): u(t)g\Gamma \in \Omega_j 
\cap \SK \} \le (\epsilon/2k)r$$
unless there exists $\gamma \in \Gamma$ such that $\{ u(t)g\gamma: 
t \in I_v(r)$ is contained in a component of $\Phi_j$. Set, 
$E''= \bigcup_{j=2}^n E'_j \in \evit$, and $E'=E'' \cap B$. Consider 
$\Phi \in \N(E')$. This shows that the there exists a neighborbood 
$ \Omega'$ of $\pi(E'')$ such that for any one-parameter unipotent subgroup
$\{ u(t) \}_{t \in k_v}$, and every $g \in G$ and $r_0>0$, we have
\[ \lambda_v( t \in I_v(r_0): t(t)g\Gamma \in \Omega' \cap \SK \}
\le \frac{\epsilon r}{2}\]
unless $\{ u(t)g\gamma: t \in I_v(r_0) \}$ is in a component of $\Phi$ for some $\gamma \in \Gamma$. 

Set $\SK_1 = \SK - \Omega'$, and choose a compact subset $K' \subseteq G$ 
such that $\pi(K')=\SK_1$.  Let $\Phi_1$ be a neighborhood of $D$ in $V$ 
such that $ \eta_{\GP}^{-1}(\Phi_1) \subseteq \Phi$ and
$O( \Phi_1) \cap \SK_1 =\emptyset$. Since $D$ is $S(v)$-small, we can clearly choose $\Phi_1$ to be $S(v)$-small. Note that since $  \rho(u(t))$ is
a one-parameter unipotent subgroup of $\GL(V)$, and $ \eta_{\GP}(C)$
is of relative size less than $ \epsilon/4$ in $D$, we can find a 
neighborhood $\Psi$ of $C$ in $V$ satisfying
\[ \lambda_v( \{ t \in I_v(r): \rho( u(t) )v \in \Psi \}) 
\le \frac{\epsilon}{4} \lambda_v( \{ t \in I_v(r): \rho( u(t))\ov \in \Phi_1 \}),\]
for all $\ov \in V- \Phi_1$, $r>0$ and unipotent subgroups $\{ u(t) \}_{t \in k_v}$. Let $ \Omega= \pi( \eta_{\GP}( \Psi) ) \subseteq G/\Gamma$. Assuming that (1) does not hold for $g \in G$, a one-parameter subgroup $\{ u(t) \}_{t \in k_v }$, and $r_0>0$. This implies that for every $\gamma \in \Gamma$, there exists $t \in I_v(r_0)$ such that $u(t)g\gamma \in G- \Phi$. 
For $q \in \mathbf{M}$, we consider the following sets:
\[ J_1(\oq)= \{ t \in I_v(r)- I_v(r_0): \rho( u(t)g)\oq \in \Phi_1 \}. \]
\[ J_2(\oq)= \{ t \in I_v(r)- I_v(r_0): \rho( u(t)g)\oq \in \Psi, \pi(u(t)g) \in
\SK_1 \}. \]
Note that $J_1(\oq)$ is an open subset of $k_v$ and is hence a disjoint union of intervals. 
We will also define $J_3(\oq) \subseteq J_1(\oq)$ as follows: if $v$ is an archimedean place, then $J_3(\oq)$ consists of those $ t \in J_1(\oq)$ such that 
for some $a \ge 0$, we have $[t,t+a] \subseteq J_1(\oq)$ and $\pi( u(t+a)g) 
\in \SK_1$. If $v$ is a non-archimedean place, then $J_3(\oq)$ consists of those $ t \in J_1(\oq)$ such that there exists an interval $J \subseteq k_v$ containing $t$ and $t' \in J$ such that $\pi(u(t')g) \in \SK_1$. Clearly $J_3(\oq)$ is open
in $k_v$, and is hence a disjoint union of intervals. 
We first make the following claim:

{\bf Claim}: $J_3(\oq_1) \cap J_3(\oq_2) = \emptyset$ for $\oq_1, \oq_2 \in \M$, unless
$\oq_2= \omega \oq_1$ for some root of unity $ \omega \in K^{\ast}$. 

In the archimedean case, if $t \in J_3(\oq_1) \cap J_3(\oq_2)$, then there exists
$a \ge 0$ such that $[t, t+ a] \subseteq J_1(\oq_1) \cap J_1(\oq_2)$ and 
$\pi(u(t+a)g) \in \SK_1$. If $q_j= \eta(\gamma_j)$ for $j=1,2$, then
$ \eta( u(t+a)g\gamma_1)= \eta( u(t+a) g \gamma_2) \in \Phi_1$, we will 
have $\oq_1, \oq_2 \in O( \Phi) \cap \SK_1 =\emptyset$, unless $ \gamma_1^{-1}\gamma_2 \in \Gamma_{\GP}$, which implies that $\oq_2= \omega \oq_1$.

In the non-archimedean case, if $t \in J_3(\oq_1) \cap J_3(\oq_2)$, then $ t \in J_1(\oq)$ and there exist intervals 
$J(\oq_1), J(\oq_2) \subseteq k_v$ containing $t$ and $t_1' \in J(\oq_1)$ and $t_2' \in J(\oq_2)$ 
such that $\pi(u(t'_1)g), \pi( u(t'_2)g) \in \SK_1$. Note that since $J(\oq_1)$ and $J(\oq_2)$ intersect one contains the other, hence,
without loss of generality, we can assume that $t'_1 \in J(\oq_1) \cap J(\oq_2)$, and $\pi(u(t'_1)g) \in \SK_1$. The rest 
of the argument is as in the archimedean case. 

Let $\mathcal{L}_1$ be the family of those components $L=I_v(a,r_1)$ of $J_1(\oq)$ such that $L \cap I_v(r_0) =\emptyset$, 
and $\mathcal{L}_2$ the rest of components.  Note that $ \hat{L} \not\subseteq I_v(r_0)$. This implies 
that 
\[ \lambda_v( \hat{L} \cap J_2(\oq) ) \le \lambda_v( \hat{L} \cap J_3(\oq) ). \]
From here and using the above claim we have
\[ \sum_{L \in \mathcal{L}_1 } \lambda_v( \hat{L} \cap J_2(\oq) ) \le 
\sum_{   L \in \mathcal{L}_1} \lambda_v( \hat{L} \cap J_3(\oq) )  \le  \lambda \lambda_v ( I_v(r) - I_v(r_0) ).   \]

We now claim that 
\[ \sum_{L \in \mathcal{L}_2} \lambda_v(L) \le \lambda_v( I_v(r)). \]
In fact, if $L \in \mathcal{L}_2$, then either $L \subseteq I_v(r_0)$ or $I_v(r_0) \subseteq L$. 
If $I_v(r_0) \subseteq L$ for some $L \in \mathcal{L}_2$, then since components are disjoint, $\mathcal{L}_2$ has precisely one element and the result follows. So, assume that for each $L \in  \mathcal{L}_2$, we have
$L \subseteq I_v(r_0)$. Then the disjointness of components imply that 
\[  \sum_{L \in \mathcal{L}_2} \lambda_v(L) \le \lambda_v( I_v(r_0)). \]
\end{proof}


\begin{proposition}\label{}
Let $\GG$ be a $k$-algebraic group, $G=\GG(k_S)$, and $\Gamma$ an $S$-arithmetic lattice in $G$.
Let $U$ be a one-parameter unipotent subgroup of $G$. Assume that 
$\GP_1, \dots, \GP_k$ are subgroups of class $\F$ for $ 1 \le i \le k$, and let 
$D_i$ be a compact subset of $A_{\GP_i}$, and $\Theta_i$ be a compact neighborhood of
$D_i$ in $V_{\GP_i}$. For a given compact set $\SK \subseteq G/\Gamma$, there exists 
$\GP'_1, \dots, \GP'_k$ of class $\F$ and compact subsets 
$D'_i$ be a compact subset of $A_{\GP'_i}$, $ 1 \le  i \le k$, such that for any 
compact set $ \SF \subseteq \SK - \bigcup_{i=1}^{k} ( \eta_{\GP_i}^{-1}( D_i) \cup
\eta_{\GP'_i}^{-1}(D'_i) )\Gamma/\Gamma$, there exists $\T_0$ such that 
for any $g \in G$ with $g\Gamma \in \SF$, and $ 1 \le i \le k$, there exists $t\in B(\T_0)$ such that 
$u_tg \not\in \eta_{ \SP_i}^{-1}( \Theta_i)$. 
\end{proposition}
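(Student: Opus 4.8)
The plan is to argue by contradiction, exactly as in the proof of the corresponding statement (Proposition 7.3 or 7.5) in \cite{DM}, but routing the key quantitative input through the previous theorem of this section (the $S$-adic strengthening of Tomanov's theorem) rather than through its real-variable predecessor. Suppose no such choice of $\GP'_1, \dots, \GP'_k$ and $D'_i$ works. Then for every candidate choice — in particular, for the one furnished by applying the previous theorem finitely many times with parameters we will fix below — there is a compact set $\SF$ in the complement, a sequence $g_j \in G$ with $g_j\Gamma \in \SF$, and an index $i$ such that $u_t g_j \in \eta_{\GP_i}^{-1}(\Theta_i)$ for \emph{all} $t$ in larger and larger intervals $B(\T_j)$ with $\T_j \to \infty$. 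Passing to a subsequence we may fix the index $i$, and by compactness of $\SF$ we may assume $g_j\Gamma \to x \in \SF$.

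Next I would run the dichotomy supplied by the preceding theorem at each place $v \in S$ (or on the full $S$-interval, applying the one-place version coordinate by coordinate, using Theorem \ref{recurrence} to glue the places together with controlled loss of measure). Set $E = \bigcap_i \eta_{\GP_i}^{-1}(D_i) \in \evit$, take $E'$ the associated member of $\evit$, and choose the neighborhood $\Phi \in \N(E')$ and the resulting $\Omega \supseteq \pi(E)$; enlarge $E'$ so that $\eta_{\GP_i}^{-1}(D_i')$ with $D_i' = \eta_{\GP_i}(\text{components of } E')$ captures all the sets appearing in the union to be removed. For $j$ large the orbit segment $\{u_t g_j : t \in B(\T_j)\}$ meets $\SK$ (by Theorem \ref{recurrence}, a definite proportion of it lies in a fixed compact set, which we arrange to contain $\SF$ in its interior), so alternative (2) of that theorem would force the measure of the return time set into $\Omega \cap \SK$ to be at most $\epsilon |\T_j|$, contradicting that the \emph{entire} segment sits in $\eta_{\GP_i}^{-1}(\Theta_i) \subseteq \eta_{\GP_i}^{-1}(\Omega\text{-enlargement})$ once $\Theta_i$ is shrunk inside the relevant component neighborhood (which we may do, since $\Theta_i$ was only required to be \emph{a} compact neighborhood and the statement is monotone: proving it for small $\Theta_i$ proves it for all larger ones after intersecting). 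Hence alternative (1) must hold: a component of $\Phi$ contains $\{u_t g_j \gamma_j : t \in B(\T_j)\}$ for some $\gamma_j \in \Gamma$.

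Now I would translate alternative (1) back into the singular-set language. That a component of $\Phi = \bigcap \eta_{\GP_\ell}^{-1}(\Theta_\ell)$ contains the whole orbit segment $\{u_t g_j\gamma_j\}$ means $\rho_{\GP_\ell}(u_t g_j \gamma_j) m_{\GP_\ell}$ stays in a bounded neighborhood of $D_\ell$ for all $t \in B(\T_j)$ and some $\ell$; since $\rho(u_t)$ is a polynomial (unipotent) curve that is bounded on an interval whose radius $\to \infty$, it must be \emph{constant}, i.e. $u_t$ fixes the line $k_S \cdot \rho_{\GP_\ell}(g_j\gamma_j)m_{\GP_\ell}$, which puts $g_j\gamma_j \in X(\GP_\ell, U)$ and in fact $\eta_{\GP_\ell}(g_j\gamma_j) \in A_{\GP_\ell}$ lying in (a neighborhood of) $D_\ell$. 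Using the identity $\eta_{\GP_\ell}^{-1}(A_{\GP_\ell}) = X(P_\ell, U)$ recorded earlier, and passing to the limit $g_j\Gamma \to x$ with $\gamma_j$ absorbed, one finds $x \in \eta_{\GP_\ell}^{-1}(D_\ell)\Gamma/\Gamma$ (after replacing $D_\ell$ by a slightly enlarged compact set, still inside $A_{\GP_\ell}$, which is the role of the $\Theta_\ell$-versus-$D_\ell$ slack). This contradicts $x \in \SF \subseteq \SK - \bigcup_i(\eta_{\GP_i}^{-1}(D_i) \cup \eta_{\GP_i'}^{-1}(D_i'))\Gamma/\Gamma$, since we arranged the removed union to contain $\eta_{\GP_\ell}^{-1}(\text{enlargement of }D_\ell)\Gamma/\Gamma$.

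\textbf{Main obstacle.} The delicate point is the bookkeeping in the second paragraph: one must choose $E'$, $\Phi$, $\Omega$, the auxiliary compact set from Theorem \ref{recurrence}, and the shrunken $\Theta_i$ in the correct \emph{order} so that no circularity arises, and one must handle the passage from the single-place interval $I_v(r)$ in the preceding theorem to the genuine $S$-interval $I(\T)$ — this requires applying the one-place dichotomy at each $v$ and summing the bad-return-time measures, which only works because $m(\T_j) \to \infty$ simultaneously at all places and because the non-divergence estimate of Theorem \ref{recurrence} is uniform over $S$-intervals. The step of concluding that a bounded unipotent polynomial curve on an unboundedly long interval is constant is the standard device (a nonconstant polynomial of bounded degree is unbounded on long intervals, and in the non-archimedean case one uses that $|u_t \cdot|_v$ is eventually governed by the leading term), so it is routine; the real work is the uniform-in-place measure estimate and the order of quantifiers.
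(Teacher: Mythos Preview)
Your route through the dichotomy theorem is not the paper's, and as written it does not close. The paper argues \emph{directly}, without contradiction and without invoking the preceding theorem at all. For each $\GP_i$ it lets $\GP_i'$ be the smallest connected $k$-subgroup containing all unipotent elements of the stabilizer $I_{\GP_i}=\{g:g\cdot\m_{\GP_i}=\m_{\GP_i}\}$, checks via Borel--Harish-Chandra that $\GP_i'$ is of class $\F$, and sets $D_i'=\eta_{\GP_i'}(C_i)$ where $C_i$ is a compact set (from Proposition~\ref{cover}) projecting onto $\{x\in\SK:\Rep(x)\cap Q_i\neq\emptyset\}$ with $Q_i=\{w\in\Theta_i:\rho_{\GP_i}(u_t)w=w\ \forall t\}$ the $U$-fixed vectors in $\Theta_i$. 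Given $\SF$ as in the statement, the discreteness of $\rho_{\GP_i}(\Gamma)\m_{\GP_i}$ reduces the problem to finitely many $\gamma\in\Gamma$, and for each such $\gamma$ the compact set $\rho_{\GP_i}(F'\gamma)\m_{\GP_i}\cap\Theta_i$ contains \emph{no} $U$-fixed vector (those were removed via $D_i'$), so the unipotent polynomial orbit must leave $\Theta_i$ in bounded time. That last step is exactly the ``bounded on long intervals $\Rightarrow$ constant'' device you identified, but the paper reaches it in two lines from a concrete construction of $D_i'$, not by forcing alternative~(1).

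Your argument has two concrete gaps. First, the monotonicity claim is inverted: escaping a \emph{small} $\Theta_i$ does not imply escaping a larger one, so you are not free to shrink $\Theta_i$ to fit inside a neighborhood handed to you by the dichotomy theorem; the $\Theta_i$ are given data and the conclusion must be for those $\Theta_i$. Second, you conflate the $\Phi\in\N(E')$ produced by the dichotomy (a neighborhood of the auxiliary set $E'$, built from possibly different class-$\F$ subgroups) with $\bigcap\eta_{\GP_\ell}^{-1}(\Theta_\ell)$ (a neighborhood of $E$ itself); alternative~(1) traps the orbit in a component of the former, which does not translate to $\eta_{\GP_\ell}(g_j\gamma_j)$ lying near $D_\ell$, so the final contradiction---that the limit $x$ lands in the removed set---is not established. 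The paper sidesteps all of this by never touching the dichotomy theorem: the correct $\GP_i'$ and $D_i'$ are simply the class-$\F$ hulls of the stabilizers of $U$-fixed vectors in $\Theta_i$, and the finiteness comes from discreteness of the $\Gamma$-orbit of $\m_{\GP_i}$ rather than from any measure estimate.
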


\begin{proof}
The proof of this proposition is very similar to the proof of Proposition 8.1. in \cite{DM}. 
Let us denote by $I_\GP$ consists of $g \in G$ with $g.\m_{\GP}=\m_{\GP}$. We first claim that there exists a subgroup $\GP'$ of class $\F$ such that $X(I_{\GP}, U) \subseteq X(\GP',U)$. In fact, let 
$\GP'$ be the smallest connected algebraic subgroup of $\GG$ which contains all the unipotent elements of $I_{\GP}$. Note that since $\GP'$ is generated by unipotent subgroups, 
we have $X_k(\GG)=\{ 1 \}$, where $X_k(\GG)$ denotes the group of characters of $G$ defined over $k$. It follows from Theorem 12.3. of \cite{BH} that $\GG' \cap \Gamma$ is a lattice in $\GG'$. 
We can now show that $\GG'$ is of class $\F$. 
Let $\GP'_1, \dots, \GP'_k$ be chosen as above such that $X(I_{\GP_i}, U) \subseteq X(\GP', U)$
for all $1 \le i \le k$. We will also define 
\[ Q_i= \{ w \in \Theta_i: \rho_{\GP_i}(u_t) w=w, \ \forall t \}. \]
Using Proposition \ref{cover}, we can find compact subsets $C_i \subseteq \epsilon_{\GP_i}^{-1}(Q_i)$, $1 \le i \le k$ such that 
\[ \pi( \widetilde{ C_i})= \{ x \in Q_i: \Rep(x) \cap C_i \neq \emptyset \}. \]
This implies that $C_i \subseteq X( \GP_i, U)$. Consider the compact sets $D_i'= \eta_{\GP'_i}(C_i) \subseteq A_{\GP'_i}$ and assume that $\SF$ is a compact subset of $\SK - 
\pi( \bigcup_{i=1}^{k} ( \eta_{\GP_i}^{-1}(D_i) \cup \eta_{\GP_i'}^{-1}(D'_i))$ is given. Find a compact subset $F' \subseteq G$ such that $\SF= \pi(F')$. From the fact that 
$ \rho_{\GP_i}(\Gamma) \m_{\GP_i}$ is a discrete subset of $V_{\GP_i}$, it follows that there
are only finitely many $ \gamma \in \Gamma$ such that $\eta_{\GP_i}( \gamma) \in \rho_{\GP_i}(F')^{-1}\Theta_i$ for some $1 \le i \le k$. It thus suffices to show that for each 
$1 \le i \le k$ and $\gamma \in \Gamma$, for all large enough $\T$ the set
$ \Theta_i \cap \rho_{\GP_i}(u_i)( \Theta_i \cap \rho_{\GP_i}(F'\gamma) \m_{\GP_i}) =\emptyset$.
Note that $ \rho_{\GP_i}(F'\gamma) \m_{\GP_i} \cap \Theta_i$ is a compact subset of $ V_{\GP_i}$
which does not contain any fixed point of the flow $ \rho_{\GP_i}(u_t)$. Since $ \rho_{\GP_i}(u_t)$ is a unipotent one-parameter subgroup of $\GL(V_{\GP_i})$ the result follows.

\end{proof}

\begin{proof}[Proof of Theorem \ref{main-DM}]

%
%

For a bounded continuous function $\phi$ defined on $G/\Gamma$, the one-parameter unipotent group $(u_t)$ and 
time box $\T $, and $x \in G/\Gamma$, we define
\[ \Delta(\phi,u_t,x,\T )=  \left|   \frac{1}{\lambda_T(I(\T) )} \int_{I(\T)} \phi( u_tx) d\lambda_{T}(t) - \int_{G/\Gamma} \phi d\mu       \right|. \] 
Let us consider the above statement with $S$ replaced by $T \subseteq S$ everywhere. We will prove the statement first for the case $|T|=1$. Then we will show that if 
the statement holds for $T_1$ and $T_2$, then it must also hold for $T_1 \cup T_2$. Let us start with the case $T= \{ v \}$ for some $v \in S$. 
We argue by contradiction. Assume that the statement of the theorem is not true. This implies the existence of a
bounded continuous function $\phi: G/\Gamma \to \RR$, a compact subset $\SK_1 \subseteq G/\Gamma$, and $ \epsilon>0$ such that 
for any proper subgroups $\GP_1, \dots, \GP_k$ of class $\F$, and any compact subsets $C_i \subseteq X(P_i, U)$, where $P_i= \GP(k_S)$, $1 \le i \le k$, there exists a compact set  $\SF$ of $\SK - \cup_i C_i\Gamma/\Gamma$  such that for all $T_0>0$, 
there exists $\T $ with $m(\T )>T_0$, and $x \in F$ such that 
\[ \Delta(\phi,u_t,x,\T ) > \epsilon. \]


Without loss of generality, we can assume that $\phi$ has a compact support, and $\|\phi \| \le 1$. There exists a compact subset $\SK \subseteq G/\Gamma$
such that for all $x \in \SK_1$ and $\T $, we have 
\begin{equation}\label{time-in-K}
 \lambda_T \{ t \in I(\T): u_tx \not\in \SK \} < \frac{1}{3} \lambda_T(I(\T)).  
\end{equation}
We can now apply to construct an increasing sequence $E_i \subseteq E_{i+1}$ in $\evit$ such that 
\begin{enumerate}
\item The family $\{ E_i \}_{i \ge 1}$ exhausts the singular set of $U$, i.e., $ \bigcup_{i=1}^{ \infty} E_i=
\sing(U)$. 
\item For each $i \ge 1$, there exists an open neighborhood $ \Omega_i \supset E_i\Gamma/E_i$ such that for any compact 
set $F \subseteq \SK- E_{i+1}\Gamma/\Gamma$, there exists $\T _{i+1}$ such that for all $x \in F$ and  
$\T \succ \T '_i$ we have 
\begin{equation}\label{time-in-sing}
 \lambda_T \{ t \in I(\T): u_tx \in \Omega_i \cap \SK \} \le \frac{1}{4^i} \lambda_T(I(\T)). 
\end{equation}

\end{enumerate}

For $i \ge 1$, write $\SK \cap \pi(E_i)= \bigcup \pi(C_{j})$ for some compact sets $C_j \in X(P_j, U)$, $1 \le j \le k$. 
As we are arguing by contradiction, we can find a compact subset $\SF_i \subseteq \SK_1 - \pi(E_{i+1})$ such that for 
each $\T _0$ there exists $x \in \SF_i$ and $\T  \ge \T _0$ such that $ \Delta(\phi,u_t,x,\T ) > \epsilon$. 
Without loss of generality, assume that $|\T _1| \le |\T _2| \le \cdots $. This implies that there exists $x_i \in F_i$ and $\sigma_i $ such that $\Delta(\phi, u_t,x_i, \sigma_i)> \epsilon. $
From \eqref{time-in-K} and \eqref{time-in-sing}, we obtain for each $j \ge 1$, a time $t_j \in I(\T _j)$ such that $u_{t_{j} }  x_j \in \SK - \bigcup_{i=1}^{j} \Omega_i$. This implies that 
\[ \Delta( \phi, u_t, \sigma_j, y_j) \ge \epsilon  - 2 \frac{|\T _j|}{| \sigma_j|}   \ge \frac{\epsilon}{3}. \] 
As $y_j \in \SK$ and $\SK$ is compact, there exists a limit point $y \in \SK$. By construction, $y \not\in \Omega_j$ for all $j \ge 1$. This shows that $y$ is not a singular point for $U$. Now, we can apply Theorem \ref{limit} to the convergent subsequence of 
$\{ y_j \}$ and the corresponding subsequence of $ \sigma_i$, to obtain a contradiction. 
 Let us now turn to the general case. 
Assume that the statement is known for $T_1, T_2 \subseteq S$, and $T_1 \cap T_2 =\emptyset$. We write $U_1=( u_v(t_v))_{ v \in T_1}$ 
and $U_2=( u_v(t_v) )_{v \in T_2}$. 
Note that there exists a compact subset $\SK_1 \subseteq G/\Gamma$ such that for all
$x \in \SK$ and any interval $I(\T) \subseteq K_{T_j}$, $j=1,2$, we have 
\[ \frac{1}{\lambda_{j}(I(\T))} \lambda_{j} \left\{  t \in I(\T): u_j(t) \in \SK_1  \right\} \ge 1 - \epsilon/16. \]
Here, we have used the shorthands $u_1(t)=(u_v(t_v))_{v \in T_1}$ and $d\lambda_1$ for the Haar measure on $ \prod_{v \in T_1} k_v$.

By the induction hypothesis, there exist finitely many proper subgroups $\GP_1, \dots, \GP_k$ of class $\F$,
and compact subsets $C_i \subseteq X(P_i, U_1)$, where $P_i= \GP_i(k_S)$, $1 \le i \le k$, such that the following holds:
for any compact subset $\SF$ of $\SK_1 - \cup_i C_i\Gamma/\Gamma$ there exists $T_0$ such that for all $x \in \SF$ and 
$\T $ with $m(\T )> T_0$, we have
\[ \left|   \frac{1}{\lambda_1(I(\T) )} \int_{I(\T)} \phi( u_1(t)x) d\lambda_1(t) - \int_{G/\Gamma} \phi d\mu       \right| \le \frac{\epsilon}{16}. \]

Since $C_i\Gamma/\Gamma \subseteq G/\Gamma$ has measure zero, we can choose neighborhoods $N_i$ of $C_i\Gamma/\Gamma$ of measure at most $ \epsilon/16k$. Now, let $\phi_i$, $1 \le i 
\le k$ be a continuous function such whose restriction to $U_i$ is $1$, and 
$\int_{G/\Gamma} \phi_i < \epsilon/8k$. By applying the induction hypothesis to $ \phi_1, \dots, \phi_k$, we can find 
 finitely many proper subgroups $\GQ_1, \dots, \GQ_l$ of class $\F$,
and compact subsets $D_i \subseteq X(Q_i, U_2)$, where $Q_i= \GQ_i(k_S)$, $1 \le i \le l$, such that the following holds:
for any compact subset $\SF$ of $\SK - \cup_i D_i\Gamma/\Gamma$ there exists $T_1$ such that for all $x \in \SF$ and 
$\T _2$ with $m(\T _2)> T_1$, we have
\[ \left|   \frac{1}{\lambda_2(I(\T) )} \int_{I(\T)} \phi_i( u_2(t)x) d\lambda_2(t) - \int_{G/\Gamma} \phi_i d\mu       \right| \le \frac{\epsilon}{16k}, \quad 1 \le i \le k. \]
Since $\phi_i(x)=1$ for all $x \in U_i$, we obtain
\[ \lambda_2 \left\{ t_2 \in I(\T _2): u_2(t_2)x \in \bigcup_{i=1}^{k} N_i \right\} \le \frac{ \epsilon}{16}. \]

Let $A= \{ t_2 \in I(\T _2): u_2(t_2)x \in \SK_1 \}.$ Note that by the choice of $\SK_1$, 
we have $$\lambda_2(A) \ge (1- \epsilon/16) \lambda_2(I(\T _2)).$$

Combining the last two equations, we obtain 
\[ \lambda_2 \left\{ t_2 \in I(\T _2): u_2(t_2)x \in \SK_1 - \bigcup_{i=1}^{k} N_i \right\}
\ge 1- \frac{\epsilon}{8}. \]

Since  $ \SK_1 - \bigcup_{i=1}^{k} N_i$ is a compact subset of $\SK_1$, disjoint from 
$ \bigcup_{i=1}^{k} C_i\Gamma/\Gamma$, we have 
there exists $T_2$ such that for all $x \in   \SK_1 - \bigcup_{i=1}^{k} N_i$ and 
$\T $ with $m(\T )> T_2$, we have
\[ \left|   \frac{1}{\lambda_1(I(\T _1) )} \int_{I(\T _1)} \phi( u_1(t)x) d\lambda_1(t) - \int_{G/\Gamma} \phi d\mu       \right| \le \frac{\epsilon}{16}. \]

Let us now consider
\[   \frac{1}{\lambda(I(\T) )} \int_{I(\T)} \phi( u(t)x) d\lambda(t) 
= \frac{1}{\lambda_1(I(\T _1) )}  \frac{1}{\lambda_2(I(\T _2) )} \int_{I(\T _1)\times I(\T _2)} \phi( u_1(t_1)u_2(t_2)x) d\lambda_2(t_1) d\lambda_1(t_2). \]

Combining the last two inequalities show that 
\[ \left|   \frac{1}{\lambda(I(\T) )} \int_{I(\T)} \phi( u(t)x) d\lambda(t)- 
\int_{G/\Gamma} \phi   \right| \le \frac{\epsilon}{4}. \]

It follows that the union $X(\GP_i), U), X(\GQ_j,U)$ will satisfy the conditions of the theorem. 
\end{proof}

\end{document}